\newtheorem{theorem}{Theorem}[section]
\newtheorem{lemma}[theorem]{Lemma}
\newtheorem{corollary}[theorem]{Corollary}
\newtheorem{proposition}[theorem]{Proposition}
\theoremstyle{definition}
\newtheorem{definition}[theorem]{Definition}
\newtheorem{example}[theorem]{Example}
\theoremstyle{remark}
\newtheorem{remark}[theorem]{Remark}
\numberwithin{equation}{section}
\begin{document}
\title{Approximation of chaotic operators}
\author{Geng Tian}
\address{Geng Tian, Department of Mathematics , Jilin university, 130012, Changchun, P.R.China} \email{tiangeng07@mails.jlu.edu.cn}

\author{Luoyi Shi}
\address{Luoyi Shi, Department of Mathematics , Jilin university, 130012, Changchun, P.R.China} \email{shiluoyi811224@sina.com}

\author{Sen Zhu}
\address{Sen Zhu, Department of Mathematics , Jilin university, 130012, Changchun, P.R.China} \email{zhus@email.jlu.edu.cn}

\author{Bingzhe Hou}
\address{Bingzhe Hou, Department of Mathematics , Jilin university, 130012, Changchun, P.R.China} \email{houbz@jlu.edu.cn}


\date{May. 28, 2009}
\subjclass[2000]{Primary 47A55, 47A53; Secondary 54H20, 37B99}
\keywords{spectrum, Cowen-Douglas operator, compact operator,
hypercyclic, Li-Yorke chaos, distributional chaos, norm-unimodal,
closure, interior, connectedness.}
\thanks{}
\begin{abstract}
\baselineskip 4mm As well-known, the concept "hypercyclic" in
operator theory is the same as the concept "transitive" in dynamical
system. Now the class of hypercyclic operators is well studied.
Following  the idea of research in hypercyclic operators, we
consider  classes of operators with some kinds of chaotic properties
in this article.

First of all, the closures of the sets of all Li-Yorke chaotic
operators or distributionally chaotic operators are discussed. We
give a spectral description of them and prove that the two closures
coincide with each other. Moreover, both the set of all Li-Yorke
chaotic operators and the set of all distributionally chaotic
operators have nonempty interiors which coincide with each other as
well. The article also includes the containing relation between the
closure of the set of all hypercyclic operators and the closure of
the set of all distributionally chaotic operators. Finally, we get
connectedness of the sets considered above.
\end{abstract}

\maketitle
\section{Introduction}
\baselineskip 5mm We are interested in the dynamical systems induced
by continuous linear operators on Banach spaces. From Rolewicz's
article \cite{Rolewicz}, hypercyclicity is widely studied. In fact,
it coincides with a dynamical property "transitivity". Now there has
been got so many improvements at this aspect, Grosse-Erdmann's and
Shapiro's articles \cite{Grosse,Shapiro} are good surveys.

In his celebrated work \cite{Her,Her2,Her1}, D. A. Herrero studied
the chaotic properties (hypercyclic and Devaney's chaotic) of linear
operators. It is important since it shows that we can study the
chaotic properties of operators in a really operator theory way. As
well-known, it is hard to check whether a topological system be
chaotic or not for a general object. But following Herrero's idea,
we can use the technique of approximation to study the properties of
chaotic operators on Hilbert space under compact or small
perturbation. An interesting result, obtained by D. A. Herrero and
Z. Y. Wang \cite{Her} or K. Chan and J. Shapiro \cite{Chan}, shows
that the identity operator $I$ can be perturbed by a small compact
operator to  be hypercyclic. This stronger result implies that a
small perturbation of a simple operator can be an operator with
complex dynamic properties.

These papers suggest us to consider the following question:

{\bf Question :} Which kinds of operators can be approximated by
chaotic operators?

From the point of approximation, we should consider closure of the
set of all operators satisfying some chaotic property. In this
paper, Li-Yorke chaotic operators and distributionally chaotic
operators will be studied by classical approximation tools developed
in \cite{HER}.

In order to explain the main results, we must introduce some
definitions and properties of chaos and Hilbert space operators.

In 1975, Li and Yorke \cite{L-Y} observed complicated dynamical
behavior for the class of interval maps with period 3. This
phenomena is currently known under the name of Li-Yorke chaos.
Recall that a discrete dynamical system is simply a continuous
mapping $f: X\rightarrow X$ where $X$ is a complete separable metric
space. For $x\in X$, the orbit of $x$ under $f$ is
$Orb(f,x)=\{x,f(x),f^{2}(x),\ldots\}$ where $f^{n}= f\circ f\circ
\cdots \circ f $ is the $n^{th}$ iterate of $f$ obtained by
composing $f$ with $n$ times.

\begin{definition}
$\{x,y\}\subset X$ is said to be a Li-Yorke chaotic pair, if
\vspace*{2mm}
\begin{align*}
\limsup\limits_{n\rightarrow\infty}d(f^{n}(x),f^{n}(y))>0, \quad
\liminf\limits_{n\rightarrow\infty}d(f^{n}(x),f^{n}(y))=0.
\end{align*}
Furthermore, $f$ is called Li-Yorke chaotic, if there exists an
uncountable subset $\Gamma\subseteq X$ such that each pair of two
distinct points in $\Gamma$ is a Li-Yorke chaotic pair.
\end{definition}

In 1994, Schweizer and Sm\'{\i}tal \cite{S-S} gave the definition of
distributional chaos (where it was called strong chaos), which
requires more complicated statistical dependence between orbits than
the existence of points which are proximal but not asymptotic.

For any pair $\{x,y\}\subset X$ and any $n\in \mathbb{N}$, define
distributional function $F^{n}_{xy}:\mathbb{R}\rightarrow [0,1]$:
\begin{equation*}
F^{n}_{xy}(\tau)=\frac{1}{n}\#\{0\leq i\leq n-1:
d(f^{i}(x),f^{i}(y))<\tau\},
\end{equation*}
where $\#\{A\}$ is the cardinality of the set $A$. Furthermore,
define
\begin{align*}
F_{xy}(\tau)=\liminf\limits_{n\rightarrow\infty}F^{n}_{xy}(\tau), \\
F_{xy}^{*}(\tau)=\limsup\limits_{n\rightarrow\infty}F^{n}_{xy}(\tau)
\end{align*}
Both $F_{xy}$ and $F_{xy}^{*}$ are nondecreasing functions and may
be viewed as cumulative probability distributional functions
satisfying $F_{xy}(\tau)=F_{xy}^{*}(\tau)=0$ for $\tau<0$.

\begin{definition}
$\{x,y\}\subset X$ is said to be a distributionally chaotic pair, if
\vspace*{2mm}
\begin{align*}
F_{xy}^{*}(\tau)\equiv 1, \ \ \forall \ \ \tau>0 \quad \text{and}
\quad F_{xy}(\epsilon)=0, \ \ \exists \ \ \epsilon>0.
\end{align*}
Furthermore, $f$ is called distributionally chaotic, if there exists
an uncountable subset $\Lambda\subseteq X$ such that each pair of
two distinct points in $\Lambda$ is a distributionally chaotic pair.
Moreover, $\Lambda$ is called a distributionally
$\epsilon$-scrambled set.
\end{definition}

From the definitions, we know distributional chaos implies Li-Yorke
chaos. But the converse implication is not true in general. In
practice, even in the simple case of Li-Yorke chaos, it might be
quite difficult to prove chaotic behavior from the very definition.
Such attempts have been made in the context of linear operators (see
\cite{Duan, Fu}). Further results of \cite{Duan} were extended in
\cite{Opr} to distributional chaos for the annihilation operator of
a quantum harmonic oscillator. Additionally, Distributional chaos
for shift operators were discussed by F. Mart\'{\i}nez-Gim\'{e}nez,
et. al. in \cite{Gim}. More about Li-Yorke chaos and distributional
chaos, one can see \cite{Smital2, Liao1, Liao2, Smital1, Wang}. In a
recent article \cite{Hbz}, B. Hou et. al. introduced a new dynamical
property for linear operators called norm-unimodality which implies
distributional chaos, and obtained a sufficient condition for
Cowen-Douglas operator being distributional chaotic and Devaney's
chaotic. We introduce the definition of norm-unimodality here.
\begin{definition}
Let $X$ be a Banach space and let $T\in B(X)$. $T$ is called
norm-unimodal, if we have a constant $\gamma
>1$ such that for any $m\in\mathbb{N}$, there exists $x_m\in
X$ satisfying
$$
\lim\limits_{k\rightarrow\infty}\|T^kx_m\|=0, \ \ \text{and} \ \ \|
T^ix_m \|\geq \gamma^i\|x_m\|, \ \ i=1,2,\ldots,m.
$$
Furthermore, such $\gamma$ is said to be a norm-unimodal constant
for the norm-unimodal operator $T$.
\end{definition}
Next, we introduce the notations and properties of Hilbert space
operators. Let $H$ be complex separable Hilbert space and denote by
$B(H)$ the set of bounded linear operators mapping $H$ into $H$. For
$T\in B(H)$, denote the kernel of $T$ and the range of $T$ by Ker$T$
and Ran$T$ respectively. Denote by $\sigma(T), \sigma_e(T),
\sigma_{lre}(T)$ and $\sigma_w(T)$ the spectrum, the essential
spectrum, the wolf spectrum and the weyl spectrum of $T$
respectively. For $\lambda\in \rho_{s-F}(T):= \mathbb{C}\backslash
\sigma_{lre}(T),$ ind$(\lambda-T)= $
dimKer$(\lambda-T)-$dimKer$(\lambda-T)^*, \mbox{min
ind}(\lambda-T)=\mbox{min}\{\mbox{dimKer}(\lambda-T),
\mbox{dimKer}(\lambda-T)^*\}$. Denote
$\rho_{s-F}^{n}(T)=\{\lambda\in\rho_{s-F}(T);~\text{ind}(\lambda-T)=n\}$,
where $-\infty \leq n\leq \infty$,
$\rho_{s-F}^{+}(T)=\{\lambda\in\rho_{s-F}(T);~\text{ind}(\lambda-T)>0\}$
and
$\rho_{s-F}^{-}(T)=\{\lambda\in\rho_{s-F}(T);~\text{ind}(\lambda-T)<0\}$.
According to \cite{HER} corollary 1.14 we know that the funtion
$\lambda\rightarrow \text{min ind}(\lambda-T)$ is constant on every
component of $\rho_{s-F}(T)$ except for an at most denumerable
subset $\rho_{s-F}^s(T)$ without limit points in $\rho_{s-F}(T)$.
Furthermore, if $\mu\in\rho_{s-F}^s(T)$ and $\lambda$ is a point of
$\rho_{s-F}(T)$ in the same component as $\mu$ but $\lambda$ is not
in $\rho_{s-F}^s(T)$, then $\text{min ind}(\mu-T)>\text{min
ind}(\lambda-T)$. $\rho_{s-F}^s(T)$ is the set of $singular ~points$
of the semi-Fredholm domain $\rho_{s-F}(T)$ of $T$;
$\rho_{s-F}^r(T)=\rho_{s-F}(T)\backslash\rho_{s-F}^s(T)$ is the set
of $regular~points$. Denote by $\sigma_0(T)$ the set of isolated
points of $\sigma(T)\backslash \sigma_e(T).$ Denote by
$\overline{E}$ and $E^0$, the closure and the interior of set $E$
respectively. In addition, denote by $LY(H),~DC(H),~UN(H)$ the set
of all Li-Yorke chaotic operators, the set of all distributionally
chaotic operators and the set of all norm-unimodal operators on $H$
respectively.

Now we are in a position to state the main results of this article.
In section 2, the closures and interiors of the sets of all
distributionally chaotic operators or Li-Yorke chaotic operators are
considered. Though distributionally chaotic operators require more
complicated statistical dependence between orbits than Li-Yorke
chaotic operators, we have

{\bf I.} $\overline{DC(H)}=\overline{LY(H)}=\{T\in
B(H);~\partial\mathbb{D}\cap \sigma_{lre}(T)\neq \emptyset \}
\cup\{T\in B(H);~\partial\mathbb{D}\subseteq\rho_{s-F}(T)
~\text{and}~ {\rm dimKer}(\lambda-T)>0,~\forall \lambda
\in\partial\mathbb{D}\}$ (Theorem \ref{5}).

{\bf II.} $DC(H)^0=LY(H)^0=\{T\in B(H),~\exists~ \lambda~ \in~
\partial\mathbb{D}~s.t.~{\rm ind}(\lambda-T)>0\}$ (Theorem \ref{6}).

From the above two results, one can see distributionally chaotic
operators and Li-Yorke chaotic operators are very similar. The
closure of $DC(H)^0$ (i.e. the closure of $LY(H)^0$) is also
considered.

{\bf III.} $\overline{DC(H)^0}=\overline{LY(H)^0}=\{T\in
B(H);~\partial\mathbb{D}\nsubseteq
\rho_{s-F}^{(0)}(T)\cup\rho_{s-F}^{(-)}(T)\}$. Moreover,
$\overline{DC(H)\backslash DC(H)^0}=\overline{LY(H)\backslash
LY(H)^0}=\{T\in B(H);
\partial\mathbb{D}\subseteq\rho_{s-F}^{(0)}(T)\cup\rho_{s-F}^{(-)}(T)$
and ${\rm
dimKer}(\lambda-T)>0,~\forall\lambda\in\partial\mathbb{D}\}
\cup\{T\in B(H);
\partial\mathbb{D}\cap\sigma_{lre}(T)\neq\emptyset~and~\rho_{s-F}^{(+)}(T)\cap\partial\mathbb{D}=\emptyset\}$. (Theorem \ref{7}).

In section 3, we get the relation between hypercyclic operators and
distributionally chaotic operators. In detail, the set of all
hypercyclic operators belongs to the closure of $DC(H)^0$. The
relation between norm-unimodal operators and distributionally
chaotic operators is also obtained.

{\bf IV.} $\overline{UN(H)}=\overline{DC(H)}=\overline{LY(H)}$,
$DC(H)^0=LY(H)^0\subseteq UN(H)$, and \\
$\overline{UN(H)\backslash DC(H)^0}=\overline{DC(H)\backslash
DC(H)^0}=\overline{LY(H)\backslash LY(H)^0}$. (Theorem \ref{9}).

It follows from this result that, the norm-unimodal operators are
very large in the class of distributionally chaotic operators.
Moreover, it is useful for people to prove that an operator is
distributionally chaotic as the criterion of hypercyclic operators
given by Kitai \cite{Kitai} and refined by Grosse-Erdmann and
Shapiro, et.al. \cite{Grosse}.

In section 4, we consider the connectedness of the sets considered
above.

{\bf V.} $DC(H)^0,~\overline{DC(H)^0}$, $\overline{DC(H)}$ and
$\overline{DC(H)\backslash DC(H)^0}$ ( i.e. $LY(H)^0$,
$\overline{LY(H)^0}$, $\overline{LY(H)}$ and
$\overline{LY(H)\backslash LY(H)^0}$ ) are all arcwise connected.
(Theorem \ref{10}).

\section{Closures and interiors of the sets of all distribitionally chaotic operators or Li-Yorke chaotic operators}

Firstly, we need some lemmas which will be used in the proof theorem
\ref{5}. The definition given by Cowen and Douglas \cite{Cowen} is
well known as follows.
\begin{definition}
For $\Omega$ a connected open subset of $\mathbb{C}$ and $n$ a
positive integer, let $B_{n}(\Omega)$ denotes the operators $T$ in
$B(H)$ which satisfy:

(1) $\Omega \subseteq \sigma(T)$;

(2) ${\rm ran}(T-\omega)=H \ for \ \omega \ in \ \Omega$;

(3) $\bigvee _{\omega\in \Omega}{\rm ker}(T-\omega)=H$; and

(4) ${\rm dimker}(T-\omega)=n$ for $\omega$ in $\Omega$.
\end{definition}

One often calls the operator $T$ in $B_{n}(\Omega)$ Cowen-Douglas
operator. Denote by $\mathbb{D}$ and $\partial\mathbb{D}$ the unit
open disk and its boundary. Then we have the following theorem.
\begin{theorem}\cite{Hbz}\label{C-D-Operator}
Let $T\in B_{n}(\Omega)$. If $\Omega\cap
\partial\mathbb{D}\neq \phi$, then $T$ is norm-unimodal.
Consequently, $T$ is distributionally chaotic.
\end{theorem}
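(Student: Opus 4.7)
The plan is to construct the witnessing vectors $x_m$ explicitly from the Cowen--Douglas eigenvector bundle. Since $\Omega$ is open and $\Omega\cap\partial\mathbb{D}\neq\emptyset$, I fix a point $\lambda_0\in\Omega$ with $|\lambda_0|<1$ (taken close to $1$ if needed). By condition (3) in the definition of $B_n(\Omega)$ one has a non-vanishing holomorphic section $e:U\to H$ on a neighbourhood $U\subseteq\Omega$ of $\lambda_0$ with $Te(\omega)=\omega e(\omega)$. Setting $f_n:=e^{(n)}(\lambda_0)/n!$ and differentiating the eigenvalue identity $n$ times yields the Jordan--chain relations $(T-\lambda_0)f_n=f_{n-1}$, $f_{-1}=0$. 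Writing $T=\lambda_0 I+(T-\lambda_0)$ and applying the binomial theorem then gives the explicit formula
\[
T^i f_n=\sum_{k=0}^{\min(i,n)}\binom{i}{k}\lambda_0^{\,i-k}f_{n-k}.
\]

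Given this formula, I would take $x_m:=f_{N(m)}$ for a sufficiently large integer $N(m)$ and verify the two requirements in the definition of norm-unimodality. The decay clause $\lim_{k\to\infty}\|T^k x_m\|=0$ is essentially immediate: only finitely many $f_j$ appear in the sum, and each coefficient $\binom{k}{j}|\lambda_0|^{k-j}$ is a polynomial in $k$ times $|\lambda_0|^k$, which tends to $0$ because $|\lambda_0|<1$. The initial-growth clause $\|T^i x_m\|\ge\gamma^i\|x_m\|$ for $i=1,\dots,m$ is the substantive part. The intuition is that $T$ restricted to $\mathrm{span}\{f_0,\dots,f_N\}$ is a single Jordan block with eigenvalue $\lambda_0$, and on an orthonormal Jordan block a Stirling-type estimate on
\[
\sum_{k=0}^i\binom{i}{k}^2|\lambda_0|^{2(i-k)}
\]
gives growth at least of order $(1+|\lambda_0|)^i$; by taking $|\lambda_0|$ close enough to $1$ (or, in a weighted-shift model such as $T=2B$ on $\ell^2$, simply by inspection, where $\gamma=2$ can be read off directly) one then obtains some $\gamma>1$ independent of $m$.

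The main obstacle is that the Taylor coefficients $f_n$ need not be orthonormal in $H$, so the orthonormal Jordan-block estimate does not apply verbatim and the ratio $\|T^i f_N\|/\|f_N\|$ depends on the Gram structure of the section. I would handle this in one of two ways: either by rescaling, replacing $e$ by $h\cdot e$ for a suitable holomorphic scalar $h$, which preserves the Jordan--chain relations but redistributes the norms $\|f_n\|$; or by pairing $T^i f_N$ with a bounded linear functional $\varphi\in H^*$ that vanishes on $f_0,\dots,f_{N-i-1}$ and is non-zero on $f_{N-i}$, which extracts a single binomial term from the expansion above and reduces the lower bound to a scalar inequality. Once norm-unimodality of $T$ is established, its distributional chaoticity is immediate from the implication \emph{norm-unimodal $\Rightarrow$ distributionally chaotic} recorded from \cite{Hbz} in the discussion preceding Definition~1.4.
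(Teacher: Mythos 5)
This theorem is not proved in the present paper at all --- it is imported wholesale from \cite{Hbz} --- so the only thing to assess is whether your argument stands on its own. It does not: there is a genuine gap in the growth clause. Your Jordan--chain relations and the binomial expansion of $T^i f_N$ are correct, and the decay clause is fine, but the lower bound $\|T^i f_N\|\ge \gamma^i\|f_N\|$ does not follow from the ``orthonormal Jordan block plus Stirling estimate'' heuristic, because the Taylor coefficients $f_j=e^{(j)}(\lambda_0)/j!$ are in general very far from orthonormal: their norms behave like $R^{-j}$, where $R$ is the radius of convergence of the section at $\lambda_0$, and all of the growth or decay of $\|T^if_N\|$ is hidden in this norm profile rather than in the binomial coefficients. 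A decisive sanity check is that your argument only ever uses $\Omega\cap\mathbb{D}\neq\emptyset$, never the actual hypothesis $\Omega\cap\partial\mathbb{D}\neq\emptyset$. Apply it to $T=\tfrac12 B\in B_1(\tfrac12\mathbb{D})$, $B$ the unilateral backward shift: for $\lambda_0=0$ one gets $f_j=2^je_j$, the binomial sum collapses to the single term $f_{N-i}$, and $\|T^if_N\|=2^{-i}\|f_N\|$, pure decay (as it must be, since $\|T\|=\tfrac12$); for $\lambda_0\neq 0$ your heuristic would predict a norm-unimodal constant $\gamma=\sqrt{1+|\lambda_0|^2}>1$, which is impossible for an operator of norm $\tfrac12$. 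Neither of your proposed repairs meets this: multiplying the section by a scalar holomorphic $h$ cannot lower $\limsup_j\|f_j\|^{1/j}$ below $1/R$ (a scalar cannot cancel the vector-valued singularity of $e$), and the functional trick isolates the $k=i$ term, whose coefficient is $\binom{i}{i}\lambda_0^0=1$, giving at best a bound by $\|f_{N-i}\|$ distorted by a Gram factor you do not control --- useless precisely when $\|f_{N-i}\|$ is small relative to $\|f_N\|$.

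The growth has to come from the part of $\Omega$ lying outside $\overline{\mathbb{D}}$, which your construction never touches. A workable argument along your general lines: fix $\lambda_0,\mu\in\Omega$ with $|\lambda_0|<1<|\mu|$. The subspaces $M_k=\mathrm{Ker}(T-\lambda_0)^k$ are finite-dimensional, $T$-invariant, satisfy $\sigma(T|_{M_k})=\{\lambda_0\}$, and $\bigcup_k M_k$ is dense in $H$ (your vectors $f_j$ lie in $\mathrm{Ker}(T-\lambda_0)^{j+1}$, and by the Cowen--Douglas identity-theorem argument the eigenvectors over any open subset of $\Omega$ already span $H$). Given $m$, take the genuine eigenvector $u=e(\mu)$, for which $\|T^iu\|=|\mu|^i\|u\|$, and approximate it by $x_m\in M_k$ with $\|x_m-u\|<\delta(m)$, where $\delta(m)$ is chosen so small that $|\mu|^i\|u\|-\|T\|^i\delta\ge\gamma^i(\|u\|+\delta)$ for all $1\le i\le m$ and some fixed $1<\gamma<|\mu|$. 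Then $\|T^ix_m\|\ge\gamma^i\|x_m\|$ for $i\le m$, while $T^kx_m\to 0$ because $x_m\in M_k$ and $|\lambda_0|<1$. Your final step --- norm-unimodality implies distributional chaos, quoted from \cite{Hbz} --- is fine.
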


\begin{remark}
In fact, this result can be extended to $n=\infty$.
\end{remark}

\begin{lemma}\label{1}
Let $T\in B(H)$. Then the following statements are equivalent.

$(1)~ T$ is not Li-Yorke chaotic.

$(2)~ \liminf\limits_{n\rightarrow\infty}||T^{n}(x)||=0$ implies
$\lim\limits_{n\rightarrow\infty}||T^{n}(x)||=0$.
\end{lemma}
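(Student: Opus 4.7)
The plan is to exploit linearity twice and reduce the two-point Li-Yorke condition to a one-point condition about the orbit of a single vector, relative to $0$.

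First I would observe that for any $x,y\in H$ and any $n$,
\[
d(T^n x, T^n y)=\|T^n x-T^n y\|=\|T^n(x-y)\|,
\]
so $\{x,y\}$ is a Li-Yorke chaotic pair iff $z:=x-y$ satisfies $\liminf_{n\to\infty}\|T^n z\|=0$ and $\limsup_{n\to\infty}\|T^n z\|>0$. Equivalently, $z$ is a vector for which $\liminf\|T^n z\|=0$ but $\lim\|T^n z\|$ is not $0$ (and thus fails to exist as $0$). This is the conceptual bridge between the two formulations.

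For the direction $(2)\Rightarrow(1)$, I would argue by contrapositive. Assuming $T$ is Li-Yorke chaotic, pick any Li-Yorke chaotic pair $\{x,y\}$ and set $z=x-y$. Then $\liminf\|T^n z\|=0$ while $\limsup\|T^n z\|>0$, so in particular $\lim\|T^n z\|\neq 0$, which contradicts (2).

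For the direction $(1)\Rightarrow(2)$, again by contrapositive, suppose (2) fails, so there is some $z\in H$ with $\liminf\|T^n z\|=0$ and $\limsup\|T^n z\|>0$ (in particular $z\neq 0$). I would exhibit the uncountable scrambled set
\[
\Gamma=\{\lambda z:\lambda\in\mathbb{C}\}\subseteq H,
\]
which is uncountable. For any distinct $\lambda,\mu\in\mathbb{C}$, linearity gives
\[
\|T^n(\lambda z)-T^n(\mu z)\|=|\lambda-\mu|\cdot\|T^n z\|,
\]
so $\liminf_{n}\|T^n(\lambda z-\mu z)\|=|\lambda-\mu|\cdot\liminf\|T^n z\|=0$ and $\limsup_{n}\|T^n(\lambda z-\mu z)\|=|\lambda-\mu|\cdot\limsup\|T^n z\|>0$. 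Hence every pair of distinct points in $\Gamma$ is a Li-Yorke chaotic pair, and $T$ is Li-Yorke chaotic.

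There is no real obstacle here; the only thing worth flagging is that linearity is doing all the work in two places (rewriting $d(T^n x,T^n y)$, and then inflating a single bad orbit into a continuum-sized scrambled set via scalar multiples). In particular the linear setting is much more rigid than the general topological setting, where producing an uncountable scrambled set from a single proximal-but-not-asymptotic pair is highly nontrivial.
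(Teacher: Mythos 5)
Your proof is correct, and it is exactly the standard argument the authors have in mind when they write that ``the proof is easy and left to the reader'': linearity converts the two-point condition into a statement about a single orbit, and scalar multiples of one bad vector furnish the uncountable scrambled set. Nothing to add.
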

The proof is easy and left to the reader.

\begin{lemma}\label{2}
Let $T\in B(H)$,\ $\sigma(T)\cap\partial\mathbb{D}=\emptyset$.\ Then
$\liminf\limits_{n\rightarrow\infty}||T^{n}(x)||=0$ implies
$\lim\limits_{n\rightarrow\infty}||T^{n}(x)||=0$. Moreover, $T$ is
neither Li-Yorke chaotic nor distributionally chaotic.
\end{lemma}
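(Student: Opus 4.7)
The plan is to use the Riesz functional calculus to split $H$ into two invariant subspaces corresponding to the parts of $\sigma(T)$ inside and outside $\partial\mathbb{D}$, and then analyze the orbit of $T$ on each piece separately. Since $\sigma(T)\cap\partial\mathbb{D}=\emptyset$ and $\sigma(T)$ is compact, we can write $\sigma(T)=\sigma_1\cup\sigma_2$ with $\sigma_1\subseteq\mathbb{D}$ and $\sigma_2\subseteq\{|z|>1\}$, each clopen in $\sigma(T)$. The Riesz projections $P_1,P_2$ associated to $\sigma_1,\sigma_2$ commute with $T$ and yield a decomposition $H=H_1\oplus H_2$ with $T=T_1\oplus T_2$, where $T_i:=T|_{H_i}$ satisfies $\sigma(T_i)=\sigma_i$.

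Next I would exploit the spectral radius formula on each summand. On $H_1$ we have $r(T_1)<1$, so $\|T_1^n\|\to 0$ geometrically; on $H_2$ the operator $T_2$ is invertible with $r(T_2^{-1})<1$, so $\|T_2^{-n}\|\to 0$ geometrically. The latter gives, for any $x_2\in H_2$,
\[
\|x_2\| = \|T_2^{-n}T_2^nx_2\|\leq \|T_2^{-n}\|\cdot\|T_2^nx_2\|,
\]
hence $\|T_2^nx_2\|\to\infty$ whenever $x_2\neq 0$. Writing an arbitrary $x\in H$ as $x=x_1+x_2$ with $x_i\in H_i$, the two components evolve independently: $\|T^nx_1\|\to 0$ while $\|T^nx_2\|$ either is identically $0$ (if $x_2=0$) or tends to $+\infty$. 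Thus the sequence $\|T^n x\|$ either converges to $0$ or diverges to $\infty$.

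From this dichotomy the implication $\liminf_n\|T^nx\|=0\Rightarrow\lim_n\|T^nx\|=0$ is immediate: a finite liminf rules out the divergent alternative. Applying Lemma \ref{1} then gives that $T$ is not Li-Yorke chaotic; since distributional chaos implies Li-Yorke chaos, $T$ is not distributionally chaotic either.

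I do not foresee a serious obstacle: the only thing to be careful about is making sure that the Riesz projection splitting genuinely reduces $T$ (so that the restrictions $T_i$ are honest Hilbert space operators with spectra $\sigma_i$), and that one uses the spectral radius formula $\lim\|T_i^n\|^{1/n}=r(T_i)$ correctly on both summands — including the invertibility of $T_2$ needed to control $\|T_2^{-n}\|$. Everything else is routine.
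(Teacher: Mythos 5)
Your proof is correct and follows essentially the same route as the paper: Riesz decomposition of $\sigma(T)$ across $\partial\mathbb{D}$, the spectral radius formula to get $\|T_1^n\|\to 0$ and $\|T_2^{-n}\|\to 0$, and hence the dichotomy forcing $x_2=0$ when $\liminf\|T^nx\|=0$. The only cosmetic difference is that the paper passes to an orthogonal upper-triangular form $\begin{bmatrix}T_1&*\\0&\widetilde{T_2}\end{bmatrix}$ on $H_1\oplus H_1^{\perp}$, whereas you work directly with the (generally non-orthogonal) Riesz direct sum, which is fine since the bounded projection $P_2$ gives $\|T_2^nx_2\|\leq\|P_2\|\,\|T^nx\|$.
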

\begin{proof}
Since $\sigma(T)\cap\partial\mathbb{D}=\emptyset$, then according to
Riesz's decomposition theorem
$$T=\begin{matrix}\begin{bmatrix}
T_{1}\\
&T_{2}\\
\end{bmatrix}&
\begin{matrix}
H_1\\
 H_2\end{matrix}\end{matrix},$$
where $\sigma(T_{1})=\sigma(T)\cap \mathbb{D}~\text{and}~
\sigma(T_{2})=\sigma(T)-\sigma(T_{1})$. Furthermore,
$$T=\begin{matrix}\begin{bmatrix}
T_{1}&*\\
&\widetilde{T_{2}}\\
\end{bmatrix}&
\begin{matrix}
H_1\\
  H_1^{\perp}\end{matrix}\end{matrix},$$ where $\widetilde{T_2}\sim
  T_{2}$ and then $\sigma(\widetilde{T_2})=\sigma(T_2)=\sigma(T)-\sigma(T_{1})$.

By spectral mapping theorem and spectral radius formula,
$$r_{1}(\widetilde{T_2})^{-1}=r(\widetilde{T_2}^{-1})=\lim\limits_{n\rightarrow\infty}\|\widetilde{T_2}^{-n}\|^{\frac{1}{n}},~(\text{where}~r_1(\cdot)=\text{inf}\{|\lambda|;~\lambda\in\sigma(\cdot)\}).$$
Notice $r_{1}(\widetilde{T_2})\geq \delta > 1$, then there is
$\epsilon
> 0$ such that $r_{1}(\widetilde{T_2})^{-1}+\epsilon < 1$. Hence we have $M \in \mathbb{N}$ such that for any $n\geq
M$,\
$$
\frac{1}{\|\widetilde{T_2}^{-n}\|} \geq
(\frac{1}{r_{1}(\widetilde{T_2})^{-1}+\epsilon})^{n}.
$$
Furthermore, for any $y\in{H_1}^{\bot}$,
$$
||\widetilde{T_2}^{n}(y)||\geq
\frac{1}{||\widetilde{T_2}^{-n}||}||y|| \geq
(\frac{1}{r_{1}(\widetilde{T_2})^{-1}+\epsilon})^{n}\|y\|\geq
\|y\|,~\text{when}~n\geq M.
$$

Let $\liminf\limits_{n\rightarrow\infty}||T^{n}(x)||=0$ and
$x=x_1\oplus x_2,~x_1\in H_1,~x_2\in{H_1}^{\bot}$, then one can
easily obtain $x_2=0$ and then $T^{n}(x)=T_{1}^{n}(x_{1})$. On the
other hand $r(T_{1}) < 1$, so there exist $0\leq\rho<1$ and $N\in
\mathbb{N}$ such that for any $n\geq N$, $\|T_1^n(x_1)\|\leq \rho^n
\|x_1\|$. Therefore,
$\lim\limits_{n\rightarrow\infty}||T^{n}(x)||=\lim\limits_{n\rightarrow\infty}||{T_1}^{n}(x_1)||=0$.
\end{proof}

Following from Kitai's result in \cite{Kitai}, no finite dimensional
Hilbert space supports a hypercyclic operator. In fact, we have

\begin{lemma}\label{4}
Let $0<n<\infty$ be an integer and $T\in B(\mathbb{C}^n)$. Then
$\liminf\limits_{m\rightarrow\infty}||T^{m}(x)||=0$ implies
$\lim\limits_{m\rightarrow\infty}||T^{m}(x)||=0$. Moreover, $T$ is
neither Li-Yorke chaotic nor distributionally chaotic.
\end{lemma}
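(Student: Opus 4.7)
The plan is to mirror the proof of Lemma \ref{2}, adding one new ingredient to handle the portion of $\sigma(T)$ lying on $\partial\mathbb{D}$ (which Lemma \ref{2} excluded by hypothesis but which may well occur in finite dimension). Using the Riesz spectral decomposition, split $\mathbb{C}^{n}=H_{<}\oplus H_{=}\oplus H_{>}$ into the $T$-invariant subspaces associated with $\sigma(T)\cap\mathbb{D}$, $\sigma(T)\cap\partial\mathbb{D}$, and $\sigma(T)\setminus\overline{\mathbb{D}}$; set $T=T_{<}\oplus T_{=}\oplus T_{>}$ and $x=x_{<}+x_{=}+x_{>}$. The (non-orthogonal) projections $P_{<},P_{=},P_{>}$ are bounded and commute with $T$, so the hypothesis $\liminf_{m}\|T^{m}x\|=0$ propagates to each summand and, in particular, yields $\liminf_{m}\|T_{=}^{m}x_{=}\|=\liminf_{m}\|T_{>}^{m}x_{>}\|=0$.

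The $H_{<}$ and $H_{>}$ pieces are then disposed of exactly as in Lemma \ref{2}: $r(T_{<})<1$ forces $\|T_{<}^{m}x_{<}\|\to 0$, while the spectral radius formula applied to $T_{>}^{-1}$ gives $\|T_{>}^{m}x_{>}\|\geq\|T_{>}^{-m}\|^{-1}\|x_{>}\|$, which grows geometrically unless $x_{>}=0$.

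The main obstacle, and the heart of the argument, is the $H_{=}$ block, where $r(T_{=})=1$ defeats the spectral-radius approach. Here the finite dimension becomes essential: via the Jordan decomposition I write $T_{=}=D+N$ with $D$ diagonalizable and $\sigma(D)\subseteq\partial\mathbb{D}$, $N$ nilpotent, and $DN=ND$, so that
\[
T_{=}^{m}=\sum_{j\geq 0}\binom{m}{j}D^{m-j}N^{j}.
\]
For any nonzero $y\in H_{=}$, let $j_{0}$ be the largest index with $N^{j_{0}}y\neq 0$; the dominant term $\binom{m}{j_{0}}D^{m-j_{0}}N^{j_{0}}y$ either grows polynomially of degree $j_{0}\geq 1$ in $m$, or (when $j_{0}=0$) reduces to $D^{m}y$ whose norm is bounded below by $\|S\|^{-1}\|S^{-1}\|^{-1}\|y\|>0$ via a similarity $D=SUS^{-1}$ to a diagonal unitary. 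In either scenario $\liminf_{m}\|T_{=}^{m}y\|>0$, forcing $x_{=}=0$.

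Consequently $x=x_{<}$, so $\|T^{m}x\|=\|T_{<}^{m}x_{<}\|\to 0$, establishing the first assertion. The moreover clause is then immediate: by Lemma \ref{1} the operator $T$ is not Li-Yorke chaotic, and since distributional chaos implies Li-Yorke chaos (noted after the two definitions), $T$ cannot be distributionally chaotic either. Finite-dimensionality is genuinely used in the $H_{=}$ step, because in infinite dimension operators with $\partial\mathbb{D}\subseteq\sigma(T)$ may admit orbits that converge to $0$ (e.g.\ the backward unilateral shift), which is precisely why the analogue of this lemma fails on infinite-dimensional $H$ and is replaced there by the richer spectral dichotomy that will appear in Theorem \ref{5}.
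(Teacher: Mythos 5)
Your argument is correct and complete. Note that the paper actually states Lemma \ref{4} without proof (it is only motivated by the remark that, after Kitai, no finite-dimensional space supports a hypercyclic operator), so there is no ``official'' proof to compare against; what you supply is a genuine proof in the spirit of Lemma \ref{2}. Your three-way Riesz splitting according to $\sigma(T)\cap\mathbb{D}$, $\sigma(T)\cap\partial\mathbb{D}$, $\sigma(T)\setminus\overline{\mathbb{D}}$ is sound (in $\mathbb{C}^{n}$ it is just the generalized eigenspace decomposition grouped by eigenvalue modulus, and the Riesz projections commute with $T$, so $\liminf_m\|T^m x\|=0$ does pass to each block), and the key new step --- that on the unimodular block $\liminf_m\|T_{=}^{m}y\|>0$ for every $y\neq 0$ --- is handled correctly via $T_{=}=D+N$: the top term $\binom{m}{j_0}D^{m-j_0}N^{j_0}y$ has norm at least $c\binom{m}{j_0}$ with $c=\|S\|^{-1}\|S^{-1}\|^{-1}\|N^{j_0}y\|>0$, while the remaining terms are $O(m^{j_0-1})$ because $\|D^{m-j}\|\leq\|S\|\,\|S^{-1}\|$ uniformly in $m$; you could make this domination estimate explicit, but it is routine. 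A marginally shorter route is to skip the Riesz decomposition altogether and argue Jordan block by Jordan block, applying the trichotomy $|\lambda|<1$, $|\lambda|=1$, $|\lambda|>1$ to each block directly, but this is the same computation in different packaging. The deduction of the ``moreover'' clause from Lemma \ref{1} and the implication ``distributional chaos $\Rightarrow$ Li--Yorke chaos'' is exactly right, as is your closing observation that finite-dimensionality is what rules out the backward-shift phenomenon.
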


From \cite{HBZ}, one can see

\begin{lemma}\label{per}
For any $\epsilon>0$, there is a compact operator $K_\epsilon\in
B(H)$ such that $\|K_{\epsilon}\|<\epsilon$ and $I+K_{\epsilon}$ is
distributionally chaotic.
\end{lemma}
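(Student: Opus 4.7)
The plan is to construct $K_\epsilon$ concretely. Given the tools developed so far, the cleanest route would be to make $I+K_\epsilon$ norm-unimodal, which by the discussion preceding Theorem \ref{C-D-Operator} implies distributional chaos; but as explained below there is a structural obstruction to getting a uniform unimodality constant from a compact perturbation, so the distributional chaos must ultimately be checked directly from its definition by producing a scrambled set.

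First I would decompose $H = \bigoplus_{k\ge 1} H_k$ into finite-dimensional summands with $\dim H_k = N_k \to \infty$, fix an orthonormal basis $\{e_0^{(k)}, \ldots, e_{N_k-1}^{(k)}\}$ of each $H_k$, and set $A_k := \lambda_k I_{H_k} + \beta_k S_k$, where $S_k$ is the standard nilpotent shift $S_k e_j^{(k)} = e_{j+1}^{(k)}$ for $j<N_k-1$ and $S_k e_{N_k-1}^{(k)} = 0$. Choosing $\lambda_k \in (0,1)$ with $\lambda_k \to 1$ and $\beta_k > 0$ with $\beta_k \to 0$, subject to $(1-\lambda_k) + \beta_k < \epsilon$ and $\lambda_k + \beta_k > 1$, and setting $K_\epsilon := (\bigoplus_k A_k) - I$, I obtain a compact operator (since each block is finite rank and $\|A_k - I_{H_k}\| \to 0$) of norm strictly less than $\epsilon$.

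The dynamical analysis rests on the binomial expansion $A_k^i e_0^{(k)} = \sum_{j=0}^{\min(i,N_k-1)} \binom{i}{j} \lambda_k^{i-j} \beta_k^j e_j^{(k)}$, which exhibits two regimes: in the ``transient'' window $i \lesssim N_k$ the norm $\|A_k^i e_0^{(k)}\|$ grows like $(\lambda_k+\beta_k)^i$ up to polynomial factors, while for $i \to \infty$ one has $A_k^i e_0^{(k)} \to 0$ because $|\lambda_k|<1$ on a finite-dimensional space. Making $N_k$ grow fast enough in $k$ to spread these transient windows out, I would then build an uncountable scrambled set $\Lambda = \{x_\delta : \delta \in \{0,1\}^{\mathbb{N}}\}$ by $x_\delta := \sum_k \delta_k \alpha_k e_0^{(k)}$ for a suitable summable sequence $(\alpha_k)$, so that for distinct $\delta,\delta'$ the orbit of $x_\delta - x_{\delta'}$ is large along one subsequence (powered by still-transient blocks on which $\delta_k \neq \delta'_k$) and arbitrarily small along another (once all such blocks have passed into their decay phase), furnishing the required $F^*_{x_\delta,x_{\delta'}}(\tau) \equiv 1$ and $F_{x_\delta,x_{\delta'}}(\epsilon_0) = 0$.

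The main obstacle is the intrinsic conflict between compactness of $K_\epsilon$ (which forces $(1-\lambda_k) + \beta_k \to 0$, hence $\lambda_k + \beta_k \to 1$) and a uniform norm-unimodality constant $\gamma > 1$ (which would require $\lambda_k + \beta_k$ bounded away from $1$ uniformly in $k$). This rules out the clean appeal to Definition 1.4 and forces the argument to exploit the merely transient exponential growth on each block, carefully synchronising the indexing of $\Lambda$ with the shifting ``active'' block. Verifying that every pair in $\Lambda$ really satisfies both distributional-chaos bounds is the technical heart of the argument.
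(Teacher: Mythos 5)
The paper does not actually prove this lemma --- it is quoted from \cite{HBZ} --- so the only question is whether your construction stands on its own. The block scheme $I+K_\epsilon=\bigoplus_k(\lambda_kI_{H_k}+\beta_kS_k)$ with $\lambda_k\to1$, $\beta_k\to0$, $\lambda_k+\beta_k>1$ is the right kind of construction and is essentially the one used in the cited reference: compactness and the norm bound are clear, the binomial expansion is correct, and by Cauchy--Schwarz $(\lambda_k+\beta_k)^i=\sum_j\binom{i}{j}\lambda_k^{i-j}\beta_k^j\le\sqrt{i+1}\,\|A_k^ie_0^{(k)}\|$ for $i\le N_k-1$, so the transient exponential growth you invoke is genuine. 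Orthogonality of the blocks then reduces both distributional estimates to single-block estimates: a lower bound coming from one ``active'' differing block gives $F_{xy}(\epsilon_0)=0$, and an $\ell^2$ upper bound summed over all blocks gives $F^*_{xy}(\tau)=1$. Your observation that no compact perturbation of $I$ can be norm-unimodal is also correct and agrees with the remark made in the proof of Theorem \ref{9}.

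There are two gaps. The first would actually break the proof as written: with $\Lambda=\{x_\delta:\delta\in\{0,1\}^{\mathbb N}\}$, two sequences $\delta,\delta'$ differing in only finitely many coordinates give $x_\delta-x_{\delta'}$ supported on finitely many blocks, each of spectral radius $\lambda_k<1$, so $\|(I+K_\epsilon)^i(x_\delta-x_{\delta'})\|\to0$ and $F_{xy}(\epsilon)=1$ for every $\epsilon>0$; such a pair is not even Li--Yorke, let alone distributionally chaotic. You must restrict to an uncountable family of $0$--$1$ sequences any two of which differ in infinitely many coordinates (for instance one representative from each of uncountably many classes of the ``eventual agreement'' equivalence relation, whose classes are all countable). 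The second gap is the deferred recursion: the parameters must be chosen in the order $N_1,\lambda_1,\beta_1,\alpha_1,N_2,\dots$, with $\alpha_k$ so small that block $k$ stays below $2^{-k}$ until long after block $k-1$ has both completed a large window of relative density close to $1$ (which requires $i_k/N_k\to0$, where $i_k$ is the first time block $k$ reaches norm $1$) and then decayed back below $2^{-k}$; the density-one small windows sit inside the quiet gaps between consecutive large windows. You correctly identify this synchronisation as the technical heart, but until it is written down the lemma is not proved; as it stands the proposal is a faithful reconstruction of the strategy of \cite{HBZ} rather than a complete argument.
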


Now we will give a description of the closures for the sets of
distributionally chaotic operators or Li-Yorke chaotic operators.

\begin{theorem}\label{5}
Let $E_1=\{T\in B(H);~\partial\mathbb{D}\cap \sigma_{lre}(T)\neq
\emptyset \}$ and $E_2=\{T\in
B(H);~\partial\mathbb{D}\subseteq\rho_{s-F}(T)~and~{\rm
dimKer}(\lambda-T)>0,~\forall \lambda \in\partial\mathbb{D}\}$. Then
$\overline{DC(H)}=\overline{LY(H)}=E_1\cup E_2.$
\end{theorem}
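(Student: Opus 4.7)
Since distributional chaos implies Li-Yorke chaos, $\overline{DC(H)}\subseteq\overline{LY(H)}$ is immediate; so the theorem reduces to the two inclusions $\overline{LY(H)}\subseteq E_1\cup E_2$ and $E_1\cup E_2\subseteq\overline{DC(H)}$.

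For $\overline{LY(H)}\subseteq E_1\cup E_2$, the plan is to rewrite the complement cleanly: $T\notin E_1\cup E_2$ precisely when $\partial\mathbb{D}\subseteq\rho_{s-F}(T)$ together with the existence of some $\lambda_0\in\partial\mathbb{D}$ satisfying ${\rm dimKer}(\lambda_0-T)=0$, equivalently, $\lambda_0-T$ is bounded below. Upper semicontinuity of $\sigma_{lre}(\cdot)$ under norm convergence together with the stability of ``bounded below'' under small perturbations of both the operator and the spectral parameter makes $(E_1\cup E_2)^c$ open, so it suffices to show every such $T$ is not Li-Yorke chaotic. Toward this, let $\Omega$ be the component of $\rho_{s-F}(T)$ containing the connected set $\partial\mathbb{D}$, on which the Fredholm index is a constant $-n$ with $n\ge 0$. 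When $n=0$, the intersection $\sigma(T)\cap\Omega$ consists only of the at-most-countable singular points of $\rho_{s-F}(T)$ in $\Omega$, so by compactness of $\partial\mathbb{D}$ the set $\sigma(T)\cap\partial\mathbb{D}$ is a finite collection of isolated Fredholm eigenvalues of finite algebraic multiplicity; performing Riesz decomposition at each such eigenvalue splits $T$ into finite-dimensional summands (not Li-Yorke chaotic by Lemma~\ref{4}) and a remainder whose spectrum avoids $\partial\mathbb{D}$ (not Li-Yorke chaotic by Lemma~\ref{2}), and a direct check via Lemma~\ref{1} confirms that a direct sum of non Li-Yorke chaotic operators is again non Li-Yorke chaotic, yielding $T\notin LY(H)$. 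When $n>0$, the whole of $\Omega$ sits inside $\sigma(T)$, ruling out a direct Riesz; the plan here is to use Herrero's reduction machinery from \cite{HER} to decompose $T$ into a bounded-below, shift-like summand and a summand of spectral radius $<1$, then argue as in Lemma~\ref{2} that no vector can satisfy $\liminf\|T^m x\|=0<\limsup\|T^m x\|$. This $n>0$ sub-case will be the main obstacle.

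For $E_1\cup E_2\subseteq\overline{DC(H)}$, I split by which of the two sets contains $T$. When $T\in E_1$, pick $\lambda_0\in\partial\mathbb{D}\cap\sigma_{lre}(T)$. A standard approximation result from \cite{HER} provides, within any norm neighborhood of $T$, an operator of the form $T'\oplus\lambda_0 I_K$ where $K$ is an infinite-dimensional Hilbert space. Applying Lemma~\ref{per} (suitably rescaled by $\lambda_0$) to the $\lambda_0 I_K$ summand produces a small compact perturbation of it that is distributionally chaotic, and since distributional chaos of a direct summand transfers to the whole, this gives $T\in\overline{DC(H)}$. When $T\in E_2$, the hypothesis ${\rm dimKer}(\lambda-T)>0$ for every $\lambda\in\partial\mathbb{D}$ forces the Fredholm index on $\Omega$ (or at least the generic minimal index) to be strictly positive; a small norm perturbation then yields a Cowen--Douglas summand $B_k(\Omega')$ on an infinite-dimensional subspace with $\Omega'\cap\partial\mathbb{D}\ne\emptyset$, and Theorem~\ref{C-D-Operator} makes this summand distributionally chaotic. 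Combining both cases yields $E_1\cup E_2\subseteq\overline{DC(H)}$, completing the chain.
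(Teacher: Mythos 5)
Your skeleton is right, and the $E_1$ half of the forward inclusion (AFV theorem to split off a $\lambda_0 I$ corner on an infinite-dimensional subspace, then Lemma \ref{per}) is exactly the paper's argument. But there are two real problems. First, in the forward inclusion for $T\in E_2$ you claim that $\dim{\rm Ker}(\lambda-T)>0$ on $\partial\mathbb{D}$ ``forces the Fredholm index on $\Omega$ \dots to be strictly positive.'' It does not: $T=2B\oplus 2S$, with $B$ and $S$ the backward and forward unilateral shifts, lies in $E_2$ (for $|\lambda|<2$ one has $\dim{\rm Ker}(\lambda-T)=1$) yet ${\rm ind}(\lambda-T)=0$ there; the parenthetical fallback to the minimal index also fails for, e.g., $2B$ alone, where ${\rm minind}(\lambda-2B)=0$. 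What the argument actually needs, and what the paper uses, is only that $\dim{\rm Ker}(\lambda-T)=n>0$ is constant on the regular points of the component $\Delta$ containing $\partial\mathbb{D}$: setting $H_r=\bigvee_{\lambda\in\rho_{s-F}^{r}(T)\cap\Delta}{\rm Ker}(\lambda-T)$, the restriction $T|_{H_r}$ lies in $B_n(\rho_{s-F}^{r}(T)\cap\Delta)$ and Theorem \ref{C-D-Operator} makes $T$ itself distributionally chaotic --- no perturbation at all. Your ``small norm perturbation yields a Cowen--Douglas summand'' is not justified as written.

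Second, and more seriously, the reverse inclusion is incomplete exactly where it is hard. Your index-$0$ case is fine (finitely many singular points on $\partial\mathbb{D}$, Riesz decomposition, Lemmas \ref{4}, \ref{2} and \ref{1}), but for index $-n<0$ you only record a plan, and the plan would fail: there is no decomposition of such a $T$ into a bounded-below summand plus a summand of spectral radius $<1$ --- the complementary piece can have spectrum of modulus $>1$, and in general one gets a triangular form, not a direct sum. The paper's device is to take $H_l=\bigvee_{\lambda\in\rho_{s-F}^{r}(T)\cap\Phi}{\rm Ker}(\lambda-T)^{*}$, which is $T^{*}$-invariant, write $T$ upper-triangularly with respect to $H_l^{\perp}\oplus H_l$, show (Claims 1 and 2) that the corner $T_0$ on $H_l^{\perp}$ meets $\partial\mathbb{D}$ only in finitely many isolated normal eigenvalues (so Riesz plus the Rosenblum--Davis--Rosenthal corollary reduce it to a finite-dimensional piece and a piece whose spectrum misses $\partial\mathbb{D}$), and observe that $T_l$ is lower triangular in an ONB of $H_l$ with unimodular diagonal entries; that unimodular diagonal is what forces $x_l=0$ whenever $\liminf_k\|T^{k}x\|=0$, and your sketch has no substitute for this step. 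Note that this construction treats $n=0$ and $n>0$ uniformly, so once the right subspace is chosen your case split becomes unnecessary.
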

\begin{proof}
Clearly, $\overline{DC(H)}\subseteq \overline{LY(H)}$. So it
suffices to show $E_1\cup E_2\subseteq\overline{DC(H)}$ and
$\overline{LY(H)}\subseteq E_1\cup E_2$.

First step, $E_1\cup E_2\subseteq\overline{DC(H)}$. We will show for
any $T\in E_1\cup E_2$ and $\epsilon>0$, there exists an operator
$C$ such that $||C||<\epsilon$ and $T+C \in DC(H)$. In fact, one can
obtain a compact operator $K$ such that $||K||<\epsilon$ and $T+K
\in DC(H)$.

If $T\in E_1$, then choose a $\lambda_0 \in
\partial\mathbb{D}\cap \sigma_{lre}(T)$. By AFV theorem
there exists a compact operator $K_1$ such that $||K_1||<\epsilon/2$
and
$$T+{K_1}=\begin{matrix}\begin{bmatrix}
\lambda_0I&*\\
&*\\
\end{bmatrix}&
\begin{matrix}
 H_0\\
  {H_0}^{\bot}\end{matrix}\end{matrix},$$
  where $\text{dim}H_0=\infty$.

Following lemma \ref{per} there exists a compact operator $K_2$ such
that $||K_2||<\epsilon/2$ and $\lambda_0I+K_2$ is distributionally
chaotic. Let $$\widetilde{K_2}=\begin{matrix}\begin{bmatrix}
K_2\\
&0\\
\end{bmatrix}&
\begin{matrix}
 H_0\\
  {H_0}^{\bot}\end{matrix}\end{matrix}.$$
Then $T+K_1+\widetilde{K_2}\in DC(H)$, where $K_1+\widetilde{K_2}$
is a compact operator and $||K_1+\widetilde{K_2}||<\epsilon$.

If $T\in E_2$, define
$$H_r=\underset{\lambda\in\rho_{s-F}^{r}(T)\cap\Delta}{\bigvee}\text{Ker}(\lambda-T),$$
where $\Delta $ is the component of semi-Fredholm domain
$\rho_{s-F}(T)$ which contains $\partial\mathbb{D}$. Then
$\text{dim} H_r=\infty$ and
$$T=\begin{matrix}\begin{bmatrix}
T_r&*\\
&*\\
\end{bmatrix}&
\begin{matrix}
 H_r\\
  {H_r}^{\bot}\end{matrix}\end{matrix}.$$
Since
\begin{eqnarray*}
&(1)& \rho_{s-F}^{r}(T)\cap\Delta\subseteq\sigma(T_r),\\[2mm]
&(2)&
\text{dimKer}(\mu-T_r)=\text{dimKer}(\mu-T)=n,~\forall\mu\in{\rho_{s-F}^{r}(T)\cap\Delta},
\end{eqnarray*}

where $n\in\mathbb{N}^+\cup\{\infty\},$
\begin{eqnarray*} &(3)&
\underset{\mu\in\rho_{s-F}^{r}(T)\cap\Delta}{\bigvee}Ker(\mu-T_r)=\underset{\mu\in\rho_{s-F}^{r}(T)\cap\Delta}{\bigvee}Ker(\mu-T)=H_r,\\[2mm]
&(4)& \text{Ran}(\mu-T_r)=H_r,~\mu\in\rho_{s-F}^{r}(T)\cap\Delta,
\end{eqnarray*}
we know $T_r\in B_{n}(\rho_{s-F}^{r}(T)\cap\Delta)$. Notice
$\rho_{s-F}^{r}(T)\cap\Delta\cap\partial\mathbb{D}\neq\emptyset$, by
theorem \ref{C-D-Operator} $T_r$ is norm-unimodal and hence
distributionally chaotic. So is $T$. The first step is complete.

Second step, $\overline{LY(H)}\subseteq E_1\cup E_2$. Notice
$$\{E_1\cup E_2\}^c=\{T\in
B(H);~\partial\mathbb{D}\subseteq\rho_{s-F}(T)~\text{and}~\exists\lambda\in\partial\mathbb{D}~\text{s.t.}~{\rm
dimKer}(\lambda-T)=0\},$$ by Fredholm theory $\{E_1\cup E_2\}^c$ is
open. Since $\{\overline{LY(H)}\}^c=\{LY(H)^c\}^0$, it suffices to
prove $\{E_1\cup E_2\}^c\subseteq LY(H)^c$.

Let $T\in \{E_1\cup E_2\}^c$, define
$$H_l=\underset{\lambda\in\rho_{s-F}^{r}(T)\cap\Phi}{\bigvee}\text{Ker}(\lambda-T)^*,$$
where $\Phi$ is the component of semi-Fredholm domain
$\rho_{s-F}(T)$ which contains $\partial\mathbb{D}$. Then
$$T=\begin{matrix}\begin{bmatrix}
T_0&*\\
&T_l\\
\end{bmatrix}&
\begin{matrix}
{H_l}^{\bot} \\
  H_l\end{matrix}\end{matrix},~(H_l ~\text{maybe}~ \{0\}!).$$

Claim 1: $\rho_{s-F}^{r}(T)\cap\Phi\subseteq\rho(T_0)$.

Let $\mu\in\rho_{s-F}^{r}(T)\cap\Phi$. Since $\lambda\rightarrow$
min ind$(\lambda-T)$ is constant on the semi-Fredholm domain
$\rho^{r}_{s-F}(T)$ and $\exists\lambda_0\in\partial\mathbb{D}$ s.t.
dimKer$(\lambda_0-T)=0$, we have Ker$(\mu-T)=\{0\}$. Hence
Ker$(\mu-T_0)=\{0\}$. Notice Ker$(\mu-T)^*=\text{Ker}(\mu-T_l)^*$
and $(\mu-T)^*(H_l)=H_l$, then Ker$(\mu-T_0)^*=\{0\}$. Therefore,
$\mu-T_0$ is invertible.

Claim 2:
$\sigma_0(T_0)\cap\Phi=\sigma(T_0)\cap\Phi=\rho_{s-F}^{s}(T)\cap\Phi$.

From claim 1,
$\sigma_0(T_0)\cap\Phi\subseteq\sigma(T_0)\cap\Phi\subseteq\rho_{s-F}^{s}(T)\cap\Phi$.
Let $\lambda\in\rho_{s-F}^{s}(T)\cap\Phi$. If $\lambda-T_0$ is
invertible, then $\lambda-T_l$ is a semi-Fredholm operator and min
ind$(\lambda-T_l)$=min ind$(\lambda-T)$. Since
$(\lambda-T_l)^*(H_l)=H_l$, dimKer$(\lambda-T_l)=0$ and hence min
ind$(\lambda-T)$=min ind$(\lambda-T_l)$=0. It is contradict to
$\lambda\in\rho_{s-F}^s(T)$. Therefore, $\lambda-T_0$ is not
invertible and
$\rho_{s-F}^s(T)\cap\Phi\subseteq\sigma(T_0)\cap\Phi$. Because
$\lambda-T$ is a left semi-Fredholm operator,
$(\lambda-T)(H_l^{\bot})$ is closed. Therefore $\lambda-T_0$ is a
semi-Fredholm operator. Again notice claim 1 we obtain
$\rho_{s-F}^s(T)\cap\Phi\subseteq\sigma_0(T_0)\cap\Phi$.

Since the only limit points of $\rho_{s-F}^s(T)$ belong to
$\partial[\rho_{s-F}(T)]$, let
$\sigma_0(T_0)\cap\Phi\cap\partial\mathbb{D}=\{\mu_i\}_{i=1}^{m},~m<\infty$.
By Riesz's decomposition theorem and Rosenblum-Davis-Rosenthal
corollary \cite{HER},
$$T_0=\begin{matrix}\begin{bmatrix}
T_{00}\\
&T_{01}
\end{bmatrix}&
\begin{matrix}
H_{00}\\
H_{01}
\end{matrix}\end{matrix}=
\begin{matrix}\begin{bmatrix}
T_{00}&*\\
&\widetilde{T_{01}}
\end{bmatrix}&
\begin{matrix}
H_{00}\\
{H_l}^{\bot}\ominus H_{00}
\end{matrix}\end{matrix}\sim
\begin{matrix}\begin{bmatrix}
T_{00}\\
&\widetilde{T_{01}}
\end{bmatrix}&
\begin{matrix}
H_{00}\\
{H_l}^{\bot}\ominus H_{00}
\end{matrix}\end{matrix},$$ where
$\sigma(T_{00})=\{\mu_i\}_{i=1}^{m},~m<\infty$,
$\sigma(T_{01})\cap\partial\mathbb{D}=\emptyset$, $T_{01}\sim
\widetilde{T_{01}}$ and dim$H_{00}<\infty$. Hence
$$T\sim S:=\begin{matrix}\begin{bmatrix}
T_{00}&0&*\\
&\widetilde{T_{01}}&*\\
&&T_l
\end{bmatrix}&
\begin{matrix}
H_{00}\\
{H_l}^{\bot}\ominus H_{00}\\
H_l
\end{matrix}\end{matrix}.$$
Moreover
$H_l=\underset{\lambda\in\rho_{s-F}^{r}(T)\cap\Phi}{\bigvee}{\rm
Ker}(\lambda-T)^*=\underset{\lambda\in\partial\mathbb{D}\cap\rho_{s-F}^{r}(T)\cap\Phi}{\bigvee}{\rm
Ker}(\lambda-T)^*$, then
$$T_l=\begin{matrix}\begin{bmatrix}
\lambda_1\\
*&\lambda_2\\
*&*&\lambda_3\\
\vdots&\vdots&\vdots&\ddots
\end{bmatrix}&
\begin{matrix}
e_1\\
e_2\\
e_3\\
\vdots
\end{matrix}\end{matrix},$$where $\{\lambda_i\}_{i=1}^{\infty}\subseteq \partial\mathbb{D}\cap\rho_{s-F}^{r}(T)\cap\Phi$
and $\{e_i\}_{i=1}^{\infty}$ is an ONB of $H_l$.

Now we come to end the proof. Since Li-Yorke chaos is invariant
under similar and lemma \ref{1}, it suffices to show
$\liminf\limits_{n\rightarrow\infty}||S^n(x)||=0$ implies
$\lim\limits_{n\rightarrow\infty}||S^n(x)||=0$. Let
$\liminf\limits_{n\rightarrow\infty}||S^n(x)||=0$, then there exist
$\{n_k\}_{k=1}^{\infty}$ such that
$\lim\limits_{n_k\rightarrow\infty}||S^{n_k}(x)||=0$. Notice
$x=x_0\oplus \widetilde{x_0}\oplus x_l,~x_0\in
H_{00},~\widetilde{x_0}\in {H_l}^{\bot}\ominus H_{00},~x_l\in H_l$,
we have $\lim\limits_{n_k\rightarrow\infty}||T_l^{n_k}(x_l)||=0$.
Following the matrix representation of $T_l$, $x_l=0$. Hence,
$$S^{n_k}(x)={\begin{matrix}\begin{bmatrix}
T_{00}\\
&\widetilde{T_{01}}
\end{bmatrix}&
\begin{matrix}
\end{matrix}\end{matrix}}^{n_k}\begin{matrix}\begin{bmatrix}
x_0\\
\widetilde{x_0}
\end{bmatrix}&
\begin{matrix}
\end{matrix}\end{matrix}.$$ So by lemma \ref{4} and lemma
\ref{2} $\lim\limits_{n\rightarrow\infty}||T_{00}^n(x_0)||=0$ and
$\lim\limits_{n\rightarrow\infty}||\widetilde{T_{01}}^n(\widetilde{x_0})||=0$.
That is $\lim\limits_{n\rightarrow\infty}||S^n(x)||=0$. The second
step is complete.
\end{proof}

Theorem \ref{5} also includes the information of the interior for
$DC(H)^c$. Obviously, the operator $T$ satisfying,
$\sigma(T)\cap\partial\mathbb{D}=\emptyset$, is in $\{DC(H)^c\}^0$
(i.e. $\{\overline{DC(H)}\}^c$). There exists an operator $T$ in
$\{DC(H)^c\}^0$, whose spectrum $\sigma(T)$ intersects the unit
circle, i.e. $\sigma(T)\cap\partial\mathbb{D}\neq\emptyset$.
\begin{example}
Let $A\in B(H)$ satisfying
\begin{align*}\left\{
\begin{array}{ll}\hspace{7mm}Ae_i=\frac{1}{2}e_{i+1},\ \hspace{20mm}  i\leq-2,
\\[2mm]
\hspace{7mm}Ae_i=2e_{i+1}, \hspace{22mm} i>-2,\\
\end{array}\right.
\end{align*}
where $\{e_i\}_{i=-\infty}^{\infty}$ is an ONB of $H$. Then $A$ is
in $\{DC(H)^c\}^0$.
\end{example}
\begin{proof}
Through easy compute, one can obtain
$\sigma(A)=\{z\in\mathbb{C};~\frac{1}{2}\leq |z|\leq 2\}$ and
ind$(\lambda-A)=-1$, dimKer$(\lambda-A)=0$ for
$\lambda\in\{z\in\mathbb{C};~\frac{1}{2}< |z|< 2\}$. According to
theorem \ref{5}, $A\in\{\overline{DC(H)}\}^c$ (i.e.
$\{DC(H)^c\}^0$).

We consider the dynamical property of $A$. For any $x$ in $H$,
$x=\Sigma_{i=-\infty}^{\infty}x_ie_i$ and
\begin{eqnarray*}\\
A^{2n+1}(x)=&(&\cdots,\frac{1}{2^{2n+1}}x_{-(2n+2)},\widehat{\frac{1}{2^{2n-1}}x_{-(2n+1)}},
\frac{1}{2^{2n-3}}x_{-2n},\cdots, \\[2mm]
&&\cdots, \frac{1}{2}x_{-(n+2)},2x_{-(n+1)}, 2^3x_{-n}, \cdots),\\[2mm]
A^{2n}(x)=&(&\cdots,\frac{1}{2^{2n}}x_{-2n-1},\widehat{\frac{1}{2^{2n-2}}x_{-2n}},
\frac{1}{2^{2n-4}}x_{-2n+1},\cdots, \\[2mm]
&&\cdots, x_{-(n+1)},2^2x_{-n}, 2^4x_{-n+1}, \cdots),\\
\end{eqnarray*}
where the position under $\wedge$ is the 0 position corresponding to
the ONB $\{e_i\}_{i=-\infty}^{\infty}$. One can easily obtain if
$x\neq0$, then $||A^n(x)||\rightarrow\infty$. Hence $A$ is not
distributionally chaotic.
\end{proof}

Next, we consider the interiors of the sets of all Li-Yorke chaotic
operators or distributionally chaotic operators. Before proving the
result theorem \ref{6}, it is convenient to cite in full length a
result of Apostol and Morrel. Let $\Gamma=\partial\Omega$, where
$\Omega$ is an analytic Cauchy domain, and let $L^2(\Gamma)$ be the
Hilbert space of (equivalent classes of) complex functions on
$\Gamma$ which are square integrable with respect to
(1/2$\pi$)-times the arc-length measure on $\Gamma$; $M(\Gamma)$
will stand for the operator defined as multiplication by $\lambda$
on $L^2(\Gamma)$. The subspace $H^2(\Gamma)$ spanned by the rational
functions with poles outside $\overline{\Omega}$ is invariant under
$M(\Gamma)$. By $M_+(\Gamma)$ and $M_-(\Gamma)$ we shall denote the
restriction of $M(\Gamma)$ to $H^2(\Gamma)$ and its compression to
$L^2(\Gamma)\ominus H^2(\Gamma)$,respectively, i.e.
$$M(\Gamma)=\begin{matrix}\begin{bmatrix}
M_+(\Gamma)&Z\\
&M_-(\Gamma)\end{bmatrix} &
\begin{matrix}
H^2(\Gamma)\\
H^2(\Gamma)^\bot\end{matrix}\end{matrix}.
$$
\begin{definition}\cite{HER}
$S\in B(H)$ is a simple model, if it has the form
$$S=\begin{matrix}\begin{bmatrix}
S_+&*&*\\
&A&*\\
&&S_-\\
\end{bmatrix}&
\begin{matrix}
\end{matrix}\end{matrix},$$
where

(1) $\sigma(S_+),~\sigma(S_-),~\sigma(A)$\ are pairwise disjoint;

(2) A is similar to a normal operator with finite spectrum;

(3) $S_+$ is (either absent or) unitarity equivalent to
$\oplus_{i=1}^{m}M_+(\partial\Omega_i)^{k_i},~1\leq k_i \leq\infty$,
where $\{\partial\Omega_i\}_{i=1}^{m}$ is a finite family of
analytic Cauchy domains with pairwise diajoint closures;

(4) $S_-$ is (either absent or) unitarity equivalent to
$\oplus_{j=1}^{n}M_-(\partial\Phi_j)^{h_j},~1\leq h_j \leq\infty$,
where $\{\partial\Phi_j\}_{j=1}^{n}$ is a finite family of analytic
Cauchy domains with pairwise diajoint closures.
\end{definition}
\begin{theorem}\cite{HER}\label{Apostol-Morrel}
The simple models are dense in $B(H)$. More precisely: Given $T\in
B(H)$ and $\epsilon>0$ there exists a simple model $S$ such that

$(1)~ \sigma(S_+)\subseteq \rho_{s-F}^-(T)\subseteq
\sigma(S_+)_{\epsilon},~\sigma(S_-)\subseteq\rho_{s-F}^+(T)\subseteq
 \sigma(S_-)_{\epsilon},$  and  $\sigma(A)\subseteq\sigma(T)_{\epsilon}.$

$(2)~ {\rm ind}(\lambda-S)={\rm ind}(\lambda-T), \ for \ each \
\lambda\in \rho_{s-F}^-(S_+)\cup\rho_{s-F}^+(S_-).$

$(3)~ ||T-S||<\epsilon.$
\end{theorem}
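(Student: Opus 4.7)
The plan is to assemble the simple model $S$ by first reading off the invariants of $T$ from its spectral picture and then building each summand to match those invariants. Concretely, I would exploit the fact that the index function $\lambda\mapsto\mathrm{ind}(\lambda-T)$ is constant on each connected component of $\rho_{s-F}(T)$ off the (discrete) singular set $\rho_{s-F}^s(T)$, and then engineer candidates $S_+$, $S_-$, and $A$ whose spectra and index functions replicate the corresponding pieces of the picture of $T$ up to an $\epsilon$-deformation.

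First I would cover $\rho_{s-F}^-(T)$ by finitely many analytic Cauchy domains $\Omega_1,\dots,\Omega_m$ with pairwise disjoint closures such that each point of $\rho_{s-F}^-(T)$ lies within $\epsilon$ of some $\overline{\Omega_i}$, and within each $\Omega_i$ the constant value of $-\mathrm{ind}(\lambda-T)$ is some $k_i\in\{1,2,\dots,\infty\}$; set $S_+=\bigoplus_{i=1}^{m}M_+(\partial\Omega_i)^{k_i}$, using that $M_+(\partial\Omega)$ is a pure isometric/Toeplitz-type model with $\mathrm{ind}(\lambda-M_+(\partial\Omega))=-1$ on $\Omega$. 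Symmetrically, cover $\rho_{s-F}^+(T)$ by domains $\Phi_1,\dots,\Phi_n$ (disjoint from the $\overline{\Omega_i}$'s and from each other) and take $S_-=\bigoplus_{j=1}^{n}M_-(\partial\Phi_j)^{h_j}$ with matching multiplicities. For the middle block, pick a finite $\epsilon$-net of $\sigma(T)\setminus\bigl[\bigcup_i\Omega_i\cup\bigcup_j\Phi_j\bigr]$, disjoint from the previously chosen closures, and let $A$ be similar to a diagonalizable normal operator supported on that net; this automatically makes (2) hold and keeps $\sigma(A)\subseteq\sigma(T)_\epsilon$.

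The second phase is to show that the direct sum $S_+\oplus A\oplus S_-$, possibly with upper-triangular off-diagonal entries supplied by Rosenblum's corollary (available because the three spectra are pairwise disjoint), can actually be realized within $\epsilon$ of $T$ in operator norm. For this I would invoke Voiculescu's theorem together with the Apostol--Herrero machinery of spectral approximation: up to a compact perturbation of norm less than $\epsilon/2$, $T$ is unitarily equivalent to an operator whose diagonal on appropriately chosen subspaces is exactly the prescribed $S_+, A, S_-$, after a further similarity that absorbs the intertwiners via Rosenblum into the off-diagonal slots. A careful bookkeeping of the continuity of the minimum-index function across $\rho_{s-F}^r(T)$ guarantees that (1) and (2) are preserved, while the Weyl--von Neumann--Berg-type perturbation controls (3).

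The main obstacle will be the simultaneous control of all three requirements: matching $\mathrm{ind}(\lambda-S)$ with $\mathrm{ind}(\lambda-T)$ pointwise on $\rho_{s-F}^-(S_+)\cup\rho_{s-F}^+(S_-)$ while forcing $\sigma(S_+),\sigma(S_-),\sigma(A)$ to be pairwise disjoint and keeping $\|T-S\|<\epsilon$. The disjointness is what makes the off-diagonal assembly via Rosenblum harmless, but it competes with the need to approximate the closed sets $\overline{\rho_{s-F}^\pm(T)}$, which may themselves intersect or touch $\sigma_{lre}(T)$; handling this requires shrinking the Cauchy domains slightly inward and absorbing the small boundary layer into the norm-$\epsilon$ perturbation, which is the technically delicate step of the argument.
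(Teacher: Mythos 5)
The first thing to note is that the paper does not prove this statement at all: Theorem \ref{Apostol-Morrel} is quoted verbatim from Herrero's monograph \cite{HER} (the Apostol--Morrel theorem on simple models), so there is no in-paper argument to compare yours against. Judged on its own terms, your outline correctly reconstructs the architecture of the result: cover $\rho_{s-F}^-(T)$ and $\rho_{s-F}^+(T)$ by analytic Cauchy domains with pairwise disjoint closures on which the index is constant, match multiplicities $k_i,h_j$ using $\mathrm{ind}(\lambda-M_+(\partial\Omega))=-1$ and $\mathrm{ind}(\lambda-M_-(\partial\Omega))=+1$ on $\Omega$, place a normal operator with finite spectrum on an $\epsilon$-net of the remaining spectrum, and use the pairwise disjointness of the three spectra to make the triangular assembly harmless via the Rosenblum--Davis--Rosenthal corollary. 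This correctly accounts for conditions (1) and (2).

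The genuine gap is condition (3), which is where the entire content of the theorem lives. You assert that ``up to a compact perturbation of norm less than $\epsilon/2$, $T$ is unitarily equivalent to an operator whose diagonal on appropriately chosen subspaces is exactly the prescribed $S_+,A,S_-$,'' citing Voiculescu's theorem and ``the Apostol--Herrero machinery.'' Voiculescu's theorem yields approximate unitary equivalence of $T$ with $T\oplus N$ for a suitable normal $N$ with $\sigma(N)=\sigma_{lre}(T)$; it does not by itself allow you to replace diagonal blocks of $T$ by the prescribed models $M_+(\partial\Omega_i)^{k_i}$ and $M_-(\partial\Phi_j)^{h_j}$ while keeping the perturbation norm below $\epsilon$. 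That replacement is precisely what the long chain of lemmas in \cite{HER} (Apostol's triangular representation with respect to $H_l=\bigvee\mathrm{Ker}(\lambda-T)^*$ and $H_r=\bigvee\mathrm{Ker}(\lambda-T)$, the analysis of the holes of $\sigma_{lre}(T)$, and the implantation of the $M_\pm$ models by small compact perturbations) is built to accomplish, so invoking ``the Apostol--Herrero machinery'' at this point is essentially invoking the theorem you are trying to prove. As written, your proposal is a correct description of what $S$ must look like, not a proof that such an $S$ exists within $\epsilon$ of $T$.
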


Additionally, we give a lemma which appeared in another article
\cite{HBZ}. But for convenience to read this article, we also give
the details of the proof.
\begin{lemma}
Let $N\in B(H)$ be a normal operator. Then
$\liminf\limits_{n\rightarrow\infty}||N^{n}(x)||=0$ implies
$\lim\limits_{n\rightarrow\infty}||N^{n}(x)||=0$. Moreover, $N$ is
neither Li-Yorke chaotic nor distributionally chaotic.
\end{lemma}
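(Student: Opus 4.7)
The plan is to use the spectral theorem to split $H$ into three reducing subspaces according to the behavior of $|\lambda|$ relative to $1$, and then show that on each piece the sequence $\|N^{n}x\|$ is very constrained. Write $N=\int \lambda\, dE(\lambda)$, where $E$ is the spectral measure of $N$ supported on $\sigma(N)$, and set
\[
H_{1}=E(\{|\lambda|<1\})H,\qquad H_{2}=E(\{|\lambda|=1\})H,\qquad H_{3}=E(\{|\lambda|>1\})H.
\]
Each $H_{i}$ reduces $N$, and for any $x\in H$ with decomposition $x=x_{1}+x_{2}+x_{3}$, orthogonality gives
\[
\|N^{n}x\|^{2}=\|N^{n}x_{1}\|^{2}+\|N^{n}x_{2}\|^{2}+\|N^{n}x_{3}\|^{2}.
\]

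Next I would analyze each term via the scalar-valued measure $\mu_{x_{i}}(\cdot)=\langle E(\cdot)x_{i},x_{i}\rangle$, using $\|N^{n}x_{i}\|^{2}=\int|\lambda|^{2n}\,d\mu_{x_{i}}(\lambda)$. On $H_{1}$ the integrand is bounded by $1$ and tends to $0$ pointwise, so dominated convergence forces $\|N^{n}x_{1}\|\to 0$. On $H_{2}$ one has $|\lambda|^{2n}\equiv 1$ on the support of $\mu_{x_{2}}$, hence $\|N^{n}x_{2}\|=\|x_{2}\|$ for every $n$. On $H_{3}$, if $x_{3}\neq 0$ then $\mu_{x_{3}}$ is a nonzero finite measure on $\{|\lambda|>1\}$, so by countable additivity there is a $\delta>0$ with $\mu_{x_{3}}(\{|\lambda|\geq 1+\delta\})>0$, which yields $\|N^{n}x_{3}\|^{2}\geq(1+\delta)^{2n}\mu_{x_{3}}(\{|\lambda|\geq 1+\delta\})\to\infty$.

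Combining these three facts, if $\liminf_{n\to\infty}\|N^{n}x\|=0$ then the $H_{2}$ piece $\|x_{2}\|$ must vanish and the $H_{3}$ piece cannot blow up, forcing $x_{2}=0$ and $x_{3}=0$. Hence $x=x_{1}$ and $\|N^{n}x\|=\|N^{n}x_{1}\|\to 0$, which is exactly the desired implication. Applying Lemma \ref{1} then shows $N\notin LY(H)$, and since distributional chaos implies Li-Yorke chaos, also $N\notin DC(H)$.

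The only real subtlety is the $H_{3}$ argument (ruling out $x_{3}\neq 0$ in the $\liminf=0$ scenario), since one must extract a uniform lower bound $1+\delta$ from the mere fact that $\mu_{x_{3}}$ is supported on $\{|\lambda|>1\}$; this is handled by the countable-additivity observation above. Everything else reduces to routine spectral calculus and dominated convergence.
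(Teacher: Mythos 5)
Your argument is correct and follows essentially the same route as the paper: both diagonalize $N$ via the spectral theorem and split according to whether $|\lambda|$ is $<1$, $=1$, or $>1$ (the paper, working with the multiplication-operator model $M_\eta$ on $L^2(X,\mu)$, lumps the last two cases into the single set $\{|\eta|\ge 1\}$ and kills that piece in one stroke). The only cosmetic difference is that on the contractive part you invoke dominated convergence to upgrade the subsequential limit to a full limit, whereas the paper uses the pointwise monotonicity of $|\eta^{n}f|$ on $\{|\eta|<1\}$ to squeeze $\|M_\eta^{n}f\|$ between consecutive subsequence terms.
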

\begin{proof}
Since $N$ is a normal operator, then there exist a locally compact
space $X$, a finite positive regular Borel measure $\mu$ and a Borel
function $\eta \in L^{\infty}(X, \mu)$ such that $N$ and $M_{\eta}$
are unitarily equivalent. $M_{\eta}$ is multiplication by $\eta$ on
$L^{2}(X, \mu)$. Let
$\liminf\limits_{n\rightarrow\infty}\|M_{\eta}^n(f)\|=0$ and
\begin{eqnarray*}
&&\Delta_{1}=\{z\in X ; |\eta(z)|\geq 1 \} ,\\
&&\Delta_{2}=\{z\in X ; |\eta(z)|< 1 \},\\
&&\Delta_{3}=\{z\in X ; f(z)=0 \ \ a.e. \ [\mu] \},\\
&&\Delta_{4}=\{z\in X ; f(z)\neq 0 \ \ a.e. \ [\mu] \}.
\end{eqnarray*}
Then there exists a sequence of positive integers
$\{n_{k}\}_{k=1}^{\infty} $ such that
$\lim\limits_{n_k\rightarrow\infty}\|M_{\eta}^{n_k}(f)\|=0$ and
\begin{eqnarray*}
\|M_{\eta}^{n_k}(f)\|^{2}&=&\int_{X}|\eta^{n_k}f|^{2}d\mu \\
&=& \int_{\Delta_{1}\cap \Delta_{4}}|\eta^{n_k}f|^{2}d\mu +
\int_{\Delta_{2}\cap \Delta_{4}}|\eta^{n_k}f|^{2}d\mu \\
&\geq& \int_{\Delta_{1}\cap \Delta_{4}}|f|^{2}d\mu +
\int_{\Delta_{2}\cap \Delta_{4}}|\eta^{n_k}f|^{2}d\mu.
\end{eqnarray*}
Consequently $\mu(\Delta_{1}\cap \Delta_{4})=0$. For any $n\in
\mathbb{N}$, there exists a positive integer $k$ such that $n_k \leq
n < n_{k+1}$. Therefore,
\begin{eqnarray*}
||M_{\eta}^n(f)||^2 &=& \int_{\Delta_{2}\cap \Delta_{4}}|\eta^n
f|^{2}d\mu \\
&=& \int_{\Delta_{2}\cap \Delta_{4}}|\eta^{n_k}
f|^{2}|\eta^{n-n_{k}}|^2d\mu \\
&\leq& \int_{\Delta_{2}\cap \Delta_{4}}|\eta^{n_k} f|^{2}d\mu \\
&=& \|M_{\eta}^{n_k}(f)\|^{2},
\end{eqnarray*}
and hence $\lim\limits_{n\rightarrow\infty}\|M_{\eta}^n(f)\|=0$.
Notice the property which we considered is invariant under unitarily
equivalence, we obtain the result.
\end{proof}
\begin{corollary}\label{3}
Let $T\in B(H)$ be a subnormal operator. Then
$\liminf\limits_{n\rightarrow\infty}||T^{n}(x)||=0$ implies
$\lim\limits_{n\rightarrow\infty}||T^{n}(x)||=0$. Moreover, $T$ is
neither Li-Yorke chaotic nor distributionally chaotic.
\end{corollary}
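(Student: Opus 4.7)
The plan is to reduce the subnormal case directly to the normal case via the minimal normal extension, then invoke the preceding lemma together with Lemma \ref{1}.

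First I would recall that, by definition, $T\in B(H)$ being subnormal means there exists a Hilbert space $K\supseteq H$ and a normal operator $N\in B(K)$ such that $H$ is invariant under $N$ and $N|_H=T$. This is the only structural fact about subnormal operators that I intend to use. The immediate consequence is that for every $x\in H$ and every $n\in\mathbb{N}$ one has $T^n x = N^n x$, and hence $\|T^n x\| = \|N^n x\|$ (the norm of $x$ being the same in $H$ as in $K$, since $H$ sits isometrically in $K$).

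Next I would use the preceding lemma, which states that for a normal operator $N$, $\liminf_{n\to\infty}\|N^n(y)\|=0$ forces $\lim_{n\to\infty}\|N^n(y)\|=0$. Applying this with $y=x\in H\subseteq K$, the hypothesis $\liminf_{n\to\infty}\|T^n(x)\|=0$ translates via the identity $\|T^n x\| = \|N^n x\|$ into $\liminf_{n\to\infty}\|N^n(x)\|=0$, which yields $\lim_{n\to\infty}\|N^n(x)\|=0$, and hence $\lim_{n\to\infty}\|T^n(x)\|=0$. This gives the first (quantitative) assertion.

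Finally, to deduce the second assertion, I would apply Lemma \ref{1}: the statement just established is precisely condition (2) of that lemma, so $T$ is not Li-Yorke chaotic. Since distributional chaos implies Li-Yorke chaos (as noted in the introduction immediately after Definition 1.2), $T$ is also not distributionally chaotic. I do not anticipate any real obstacle here: the whole content of the corollary is the passage from ``normal'' to ``subnormal'', and this is handled for free by the defining extension property of subnormal operators together with the fact that powers commute with the inclusion $H\hookrightarrow K$.
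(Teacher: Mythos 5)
Your proof is correct and is exactly the intended argument: the paper states the corollary without proof as an immediate consequence of the preceding lemma on normal operators, and the reduction via the normal extension $N\supseteq T$ with $\|T^nx\|=\|N^nx\|$ for $x\in H$, followed by Lemma \ref{1} and the fact that distributional chaos implies Li-Yorke chaos, is precisely how it is meant to follow. No issues.
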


\begin{theorem}\label{6}
Let $F=\{T\in B(H),~\exists~ \lambda~ \in~
\partial\mathbb{D}~s.t.~{\rm ind}(\lambda-T)>0\}$. Then $DC(H)^0=LY(H)^0=F$.
\end{theorem}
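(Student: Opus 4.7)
The plan is to establish the two nontrivial inclusions $F\subseteq DC(H)^{0}$ and $LY(H)^{0}\subseteq F$; the middle inclusion $DC(H)^{0}\subseteq LY(H)^{0}$ is automatic from $DC(H)\subseteq LY(H)$ by monotonicity of the interior operation.

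For $F\subseteq DC(H)^{0}$, given $T\in F$ I fix $\lambda_{0}\in\partial\mathbb{D}$ with $\text{ind}(\lambda_{0}-T)>0$. The semi-Fredholm property is stable under small norm perturbations and the index is locally constant there, so I can choose $r>0$ such that every $S\in B(H)$ with $\|S-T\|<r$ satisfies $\lambda_{0}\in\rho_{s-F}(S)$ and $\text{ind}(\lambda_{0}-S)=\text{ind}(\lambda_{0}-T)>0$. Fix such an $S$ and let $\Delta_{S}$ be the component of $\rho_{s-F}(S)$ through $\lambda_{0}$. Since $\text{ind}(\cdot-S)$ is constant and positive on $\Delta_{S}$, we get $\text{dimKer}(\mu-S)\geq \text{ind}(\mu-S)\geq 1$ at every regular $\mu\in\Delta_{S}$, and $\Delta_{S}\cap\partial\mathbb{D}\neq\emptyset$. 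I now repeat verbatim the $E_{2}$-construction from the proof of Theorem \ref{5}: set $H_{r}=\bigvee_{\mu\in\rho_{s-F}^{r}(S)\cap\Delta_{S}}\text{Ker}(\mu-S)$, observe that $H_{r}$ is $S$-invariant, and verify that the compression $S_{r}:=S|_{H_{r}}$ satisfies the four defining conditions of a Cowen--Douglas operator on $\rho_{s-F}^{r}(S)\cap\Delta_{S}$. Theorem \ref{C-D-Operator} (in the extended form allowing $n=\infty$) then makes $S_{r}$ norm-unimodal, hence distributionally chaotic; since $H_{r}$ is invariant, any distributionally chaotic pair for $S_{r}$ is also one for $S$, so $S\in DC(H)$. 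Thus the whole $r$-ball around $T$ lies in $DC(H)$ and $T\in DC(H)^{0}$.

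For $LY(H)^{0}\subseteq F$, I argue the contrapositive $F^{c}\subseteq\overline{LY(H)^{c}}$. Let $T\in F^{c}$, so $\rho_{s-F}^{+}(T)\cap\partial\mathbb{D}=\emptyset$, and fix $\epsilon>0$. Invoke Theorem \ref{Apostol-Morrel} to produce a simple model $S$ with $\|T-S\|<\epsilon$ and diagonal blocks $S_{+},A,S_{-}$. The hypothesis on $T$ forces $\sigma(S_{-})\subseteq\rho_{s-F}^{+}(T)$ to be disjoint from $\partial\mathbb{D}$. Because the three spectra $\sigma(S_{+}),\sigma(A),\sigma(S_{-})$ are pairwise disjoint, the Rosenblum--Davis--Rosenthal corollary yields $S\sim S_{+}\oplus A\oplus S_{-}$, and since Li-Yorke chaos is a similarity invariant, it suffices to show the direct sum is not Li-Yorke chaotic. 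By Lemma \ref{1} it is enough to check that $\liminf_{n}\|(S_{+}\oplus A\oplus S_{-})^{n}x\|=0$ forces $\lim_{n}\|(S_{+}\oplus A\oplus S_{-})^{n}x\|=0$. Decomposing $x=x_{+}\oplus x_{0}\oplus x_{-}$ reduces this to three componentwise implications: Corollary \ref{3} handles $S_{+}$ because it is subnormal, Lemma \ref{2} handles $S_{-}$ because $\sigma(S_{-})\cap\partial\mathbb{D}=\emptyset$, and for $A$ a direct eigenspace calculation on a similar diagonalisable normal $N=\bigoplus_{i}\lambda_{i}I_{H_{i}}$ with finite spectrum shows that the liminf-zero hypothesis forces the components in eigenspaces with $|\lambda_{i}|\geq 1$ to vanish, after which the remaining orbit decays geometrically. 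Hence $S\notin LY(H)$, so $T\in\overline{LY(H)^{c}}=(LY(H)^{0})^{c}$.

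The main technical worry is the middle block $A$ in the second step: its spectrum is only confined to the $\epsilon$-fattening of $\sigma(T)$ and can genuinely intersect $\partial\mathbb{D}$, so neither Lemma \ref{2} nor Corollary \ref{3} applies off the shelf. The structural fact that rescues the argument is precisely the extra clause in the definition of a simple model, namely that $A$ is similar to a \emph{diagonalisable} normal with finitely many eigenvalues; this collapses its dynamics into a finite direct sum of scalar multiples of the identity and makes the eigenspace-by-eigenspace analysis immediate.
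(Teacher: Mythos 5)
Your proposal is correct and follows essentially the same route as the paper: openness of $F$ plus the Cowen--Douglas/norm-unimodal construction on $H_r$ for the inclusion $F\subseteq DC(H)^0$, and the Apostol--Morrel simple model with the Rosenblum--Davis--Rosenthal splitting, Lemma \ref{2} and Corollary \ref{3} for $LY(H)^0\subseteq F$. The only cosmetic difference is that you handle the middle block by a direct eigenspace computation on the finite-spectrum normal $N$, whereas the paper absorbs $N$ into the subnormal block $S_+\oplus N$ and applies Corollary \ref{3} to it as a whole.
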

\begin{proof}
Obviously, $DC(H)^0\subseteq LY(H)^0$. So we only need to show
$F\subseteq DC(H)^0$ and $LY(H)^0\subseteq F$.

First step, $F\subseteq DC(H)^0$. By Fredholm theory, $F$ is open.
Consequently, it suffices to prove $F\subseteq DC(H)$.

Let $T\in F$. Define
$$H_r=\underset{\mu\in\rho_{s-F}^{r}(T)\cap\Delta}{\bigvee}{\rm Ker}(\mu-T),$$
where $\Delta$ is the component of $\rho_{s-F}^{(+)}(T)$ which
contains a point in $\partial\mathbb{D}$. Then dim$H_r = \infty$ and
$$T=\begin{matrix}\begin{bmatrix}
T_r&*\\
&*\\
\end{bmatrix}&
\begin{matrix}
H_r\\
 {H_r}^\bot\end{matrix}\end{matrix}.$$
Similar to theorem \ref{5}, we have $T_r\in
B_n(\rho_{s-F}^{r}(T)\cap\Delta)$, moreover $T$ is norm-unimodal and
distributionally chaotic. The first step is complete.

Next step, $LY(H)^0\subseteq F$. Because
$\{LY(H)^0\}^c=\overline{LY(H)^c}$, we only need to show for any
$T\in F^c$ and $\epsilon> 0$, there exists $C\in B(H)$ such that
$||C||<\epsilon$ and $T+C$ is not Li-Yorke chaotic.

Let $T\in F^c$ and $\epsilon>0$. According to Theorem
\ref{Apostol-Morrel}, there exists a simple model
$$S=\begin{matrix}\begin{bmatrix}
S_+&*&*\\
&A&*\\
&&S_-\\
\end{bmatrix}&
\begin{matrix}
\end{matrix}\end{matrix}$$ such that
$ \sigma(S_-)\subseteq \rho_{s-F}^+(T)\subseteq
\sigma(S_-)_{\epsilon}$ and $||T-S||<\epsilon$. Since
$\rho_{s-F}^{(+)}(T)\cap\partial\mathbb{D}=\emptyset$, we can ensure
$\sigma(S_-)\cap\partial\mathbb{D}=\emptyset$ according to the proof
of theorem \ref{Apostol-Morrel}. So it suffices to prove $S$ is
impossible to be Li-Yorke chaotic.

Notice $\sigma(S_+),~\sigma(S_-),~\sigma(A)$ are pairwise disjoint,
according to Rosenblum-Davis-Rosenthal corollary \cite{HER},
$$S\sim\begin{matrix}\begin{bmatrix}
S_+\\
&A\\
&&S_-\\
\end{bmatrix}&
\begin{matrix}
\end{matrix}\end{matrix}.$$
Because Li-Yorke chaotic is invariant under similar and $A$ is
similar to a normal operator $N$ with finite spectrum, we directly
let $S= S_+\oplus N\oplus S_-$.

If $\liminf\limits_{n\rightarrow\infty}||S^n(x)||=0$, since
 $x=x_+\oplus x_0\oplus
x_-$ corresponding to the space decomposition, following lamma
\ref{2} and corollary \ref{3} we have
$$\lim\limits_{n\rightarrow\infty}||{S_-}^n(x_-)||=0 \ \text{and} \
\lim\limits_{n\rightarrow\infty}||{\begin{matrix}\begin{bmatrix}
S_+\\
&N\\
\end{bmatrix}&
\begin{matrix}
\end{matrix}\end{matrix}}^n\begin{matrix}\begin{bmatrix}
x_+\\
x_0\\
\end{bmatrix}&
\begin{matrix}
\end{matrix}\end{matrix}||=0.$$ Hence $\lim\limits_{n\rightarrow\infty}||S^n(x)||=0$. $S$ is not Li-Yorke
chaotic. The second step is complete.
\end{proof}

We give an example which is distributionally chaotic but not in
$DC(H)^0$.
\begin{example}
Let $A\in B(H)$ satisfying
\begin{align*}\left\{
\begin{array}{ll}\hspace{7mm}Ae_i=2e_{i-1},\ \hspace{20mm}  i\neq0,
\\[2mm]
\hspace{7mm}Ae_0=0,
\end{array}\right.
\end{align*}
where $\{e_i\}_{i=-\infty}^{\infty}$ is an ONB of $H$. Then $A$ is
distributionally chaotic but not in $DC(H)^0$.
\end{example}
\begin{proof}
Since $H_0=\vee_{i=0}^{\infty}\{e_i\}$ is an invariant space of $A$
and $A|_{H_0}=2B$ is distributionally chaotic, where $B$ is backward
unilateral shift, then $A$ is distributionally chaotic. One can
easily obtain ind$(\lambda-A)=0$ for $|\lambda|<2$, so $A$ is not in
$DC(H)^0$.

We prove it directly. For any $\epsilon>0$, let $K\in B(H)$
satisfying $Ke_0=\epsilon e_{-1},~Ke_i=0,~i\neq0$. Then $K$ is
compact and $||K||=\epsilon$. Since
$\sigma(A+C)=\{z\in\mathbb{C};~|z|=2\}$, $A+K$ is not
distributionally chaotic. Hence $A$ is not in $DC(H)^0$.
\end{proof}

Unfortunately, the closure of $DC(H)^0$ (i.e. the closure of
$LY(H)^0$) is not equal to the closure of $DC(H)$ (i.e. the closure
of $LY(H)$). It means there exists a class of distributionally
chaotic operators (Li-Yorke chaotic operators) which are more
complicated. We give these descriptions.
\begin{theorem}\label{7}
Let $G_0=\{T\in B(H);~\partial\mathbb{D}\nsubseteq
\rho_{s-F}^{(0)}(T)\cup\rho_{s-F}^{(-)}(T)\}$, $G_1=\{T\in B(H);
\partial\mathbb{D}\subseteq\rho_{s-F}^{(0)}(T)\cup\rho_{s-F}^{(-)}(T)
~and~{\rm
dimKer}(\lambda-T)>0,~\forall\lambda\in\partial\mathbb{D}\}$ and
$G_2=\{T\in B(H);
\partial\mathbb{D}\cap\sigma_{lre}(T)\neq\emptyset~and~\rho_{s-F}^{(+)}(T)\cap\partial\mathbb{D}=\emptyset\}$. Then
$\overline{DC(H)^0}=\overline{LY(H)^0}=G_0$ and
$\overline{DC(H)\backslash DC(H)^0}=\overline{LY(H)\backslash
LY(H)^0}= G_1\cup G_2$.
\end{theorem}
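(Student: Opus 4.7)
The plan is to translate everything into the language of the sets $F := \{T \in B(H) : \exists\, \lambda \in \partial \mathbb{D}, \; \mathrm{ind}(\lambda - T) > 0\}$, $E_1$, $E_2$ from the earlier theorems, using $DC(H)^0 = LY(H)^0 = F$ (Theorem \ref{6}) and $\overline{DC(H)} = \overline{LY(H)} = E_1 \cup E_2$ (Theorem \ref{5}). A direct inspection of definitions yields the set-theoretic identities
\[
G_2 = E_1 \cap F^c, \qquad G_1 = E_2 \cap F^c, \qquad G_0 = F \cup E_1 = F \cup G_2,
\]
so the two claims collapse to $\overline{F} = G_0$ and $\overline{DC(H)\setminus F} = (E_1 \cup E_2) \cap F^c$.

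For $\overline{F} = G_0$, I will first show $G_0$ is closed by verifying that its complement $\{T : \partial \mathbb{D} \subseteq \rho_{s-F}^{(0)}(T) \cup \rho_{s-F}^{(-)}(T)\}$ is open; this follows from the openness of the semi-Fredholm region in $(T, \lambda)$, local constancy of the index, and compactness of $\partial \mathbb{D}$. Combined with $F \subseteq G_0$, this gives $\overline{F} \subseteq G_0$. For the reverse inclusion, given $T \in G_0 \setminus F = E_1 \cap F^c$, pick $\lambda_0 \in \partial \mathbb{D} \cap \sigma_{lre}(T)$; the Apostol--Foias--Voiculescu theorem from \cite{HER} produces, for any $\epsilon > 0$, a compact $K$ with $\|K\| < \epsilon$ and $\mathrm{ind}(\lambda_0 - (T+K)) = 1$, placing $T+K$ in $F$ and hence $T \in \overline{F}$.

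For the second identity, the containment $DC(H)\setminus DC(H)^0 \subseteq \overline{DC(H)} \cap F^c = G_1 \cup G_2$ is automatic, and $G_1 \cup G_2$ is the intersection of the closed set $\overline{DC(H)}$ with $F^c$ (with $F$ open), so it is closed, giving $\overline{DC(H)\setminus DC(H)^0} \subseteq G_1 \cup G_2$. For the reverse, if $T \in G_1 = E_2 \cap F^c$ the $E_2$-argument in the proof of Theorem \ref{5} applies verbatim: the restriction of $T$ to $H_r = \bigvee_{\lambda \in \rho_{s-F}^r(T) \cap \Delta} \mathrm{Ker}(\lambda - T)$ belongs to $B_n(\rho_{s-F}^r(T) \cap \Delta)$ with $\Delta \cap \partial \mathbb{D} \neq \emptyset$, so by Theorem \ref{C-D-Operator} it is norm-unimodal and $T$ itself is distributionally chaotic; combined with $T \in F^c$ this yields $T \in DC(H) \setminus DC(H)^0$. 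If instead $T \in G_2 = E_1 \cap F^c$, I will copy the $E_1$-construction from Theorem \ref{5}: AFV provides a compact $K_1$ of small norm with $T + K_1$ block upper-triangular having $\lambda_0 I_{H_0}$ in the $(1,1)$ block and $\dim H_0 = \infty$, and Lemma \ref{per} shifted by the unimodular scalar $\lambda_0$ gives a compact $\widetilde{K}_2$ of small norm making $\lambda_0 I_{H_0} + K_2$ distributionally chaotic, so $T' := T + K_1 + \widetilde{K}_2$ is distributionally chaotic via the invariant subspace $H_0$.

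The main obstacle is confirming $T' \in G_2$ (so that $T' \notin F = DC(H)^0$); everything else is definition-chasing. Compactness of $K_1 + \widetilde{K}_2$ gives $\sigma_{lre}(T') = \sigma_{lre}(T)$, hence $\lambda_0 \in \partial \mathbb{D} \cap \sigma_{lre}(T')$; and compact perturbations preserve both $\rho_{s-F}$ and the Fredholm index, so for any $\lambda \in \rho_{s-F}(T') \cap \partial \mathbb{D} = \rho_{s-F}(T) \cap \partial \mathbb{D}$ one has $\mathrm{ind}(\lambda - T') = \mathrm{ind}(\lambda - T) \leq 0$, using $T \in F^c$. Therefore $\rho_{s-F}^{(+)}(T') \cap \partial \mathbb{D} = \emptyset$, giving $T' \in G_2 \cap DC(H) \subseteq DC(H) \setminus DC(H)^0$ and completing the plan.
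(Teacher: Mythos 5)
Your reduction to $\overline{F}=G_0$ and $\overline{DC(H)\setminus F}=(E_1\cup E_2)\cap F^c$ is correct, and your treatment of the second identity (upper bound via closedness of $(E_1\cup E_2)\cap F^c$; $G_1\subseteq DC(H)\setminus F$ via the Cowen--Douglas restriction; $G_2$ via AFV plus Lemma \ref{per}, with compact perturbations preserving $\rho_{s-F}^{(+)}$) matches the paper's argument and is sound. The gap is in the reverse inclusion $G_0\subseteq\overline{F}$. You claim that for $T\in E_1\cap F^c$ and $\lambda_0\in\partial\mathbb{D}\cap\sigma_{lre}(T)$, AFV yields a \emph{compact} $K$ with $\|K\|<\epsilon$ and $\mathrm{ind}(\lambda_0-(T+K))=1$. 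This is impossible: $\sigma_{lre}$ is invariant under compact perturbation, so $\lambda_0\in\sigma_{lre}(T+K)$ for every compact $K$, and $\lambda_0-(T+K)$ is never semi-Fredholm; its index is undefined. What AFV actually gives is a block decomposition with a $\lambda_0 I$ summand on an infinite-dimensional subspace; one must then add a small weighted backward shift on that block, which produces $\mathrm{ind}(\lambda-(T+K_1+K_2))=1$ only at points $\lambda\in\partial\mathbb{D}$ \emph{near} $\lambda_0$ that already lie in $\rho_{s-F}^{(0)}(T)$ (the index adds across the triangular decomposition). This is the paper's Case (2), and it requires $\rho_{s-F}^{(0)}(T)\cap\partial\mathbb{D}\neq\emptyset$ with $\lambda_0$ chosen on the relative boundary of that set in $\partial\mathbb{D}$.

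Consequently your argument cannot handle the genuinely hard case $[\rho_{s-F}^{(+)}(T)\cup\rho_{s-F}^{(0)}(T)]\cap\partial\mathbb{D}=\emptyset$ but $\sigma_{lre}(T)\cap\partial\mathbb{D}\neq\emptyset$ --- for example the bilateral shift $U$, for which $\partial\mathbb{D}=\sigma_{lre}(U)$ and hence $U+K\notin F$ for \emph{every} compact $K$, yet $U\in G_0$. Here one must first apply the Apostol--Morrel simple-models theorem (Theorem \ref{Apostol-Morrel}) to perturb $T$ by a small but \emph{non-compact} operator $C_1$ so that $\rho(T+C_1)\cap\partial\mathbb{D}\neq\emptyset$ while still $\sigma_{lre}(T+C_1)\cap\partial\mathbb{D}\neq\emptyset$, and only then run the Case (2) construction. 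This non-compact preliminary step is the missing idea; the rest of your proposal is essentially the paper's proof.
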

\begin{proof}
First, we prove $\overline{DC(H)^0}=\overline{LY(H)^0}=G_0$.
Clearly, $DC(H)^0=LY(H)^0=F\subseteq G_0$, where $F$ is denoted in
theorem \ref{6}. By Fredholm theory, $G_0$ is closed. Hence
$\overline{DC(H)^0}=\overline{LY(H)^0}\subseteq G_0$. So we only
need to show for any $T\in G_0$ and $\epsilon>0$, there exists $C\in
B(H)$ such that $||C||<\epsilon$ and $T+C\in DC(H)^0$ (i.e.
$LY(H)^0$).

Let $T\in G_0$ and $\epsilon>0$. Then
\begin{eqnarray*}
&&(1) \exists\lambda\in\partial\mathbb{D} ~\text{s.t. ind}(\lambda-T)>0, ~~\text{or}\\
&&(2) \rho_{s-F}^{(+)}(T)\cap\partial\mathbb{D}=\emptyset
,~\text{but}~\exists\lambda\in\partial\mathbb{D} ~\text{s.t. ind}(\lambda-T)=0,~~ \text{or}\\
&&(3)
[\rho_{s-F}^{(+)}(T)\cup\rho_{s-F}^{(0)}(T)]\cap\partial\mathbb{D}=\emptyset,~\text{but}~\sigma_{lre}(T)\cap\partial\mathbb{D}\neq\emptyset.
\end{eqnarray*}

Case (1) is obvious.

Case (2). First it implies
$\sigma_{lre}(T)\cap\partial\mathbb{D}\neq\emptyset$. Then choose a
$\lambda_0\in\sigma_{lre}(T)\cap\partial[\rho_{s-F}^{(0)}(T)\cap\partial\mathbb{D}]$.
According to AFV theorem, there exists a compact operator $K_1$ such
that $||K_1||<\epsilon/2$ and
$$T+K_1=\begin{matrix}\begin{bmatrix}
\lambda_0I&*\\
&A\\
\end{bmatrix}&
\begin{matrix}
H_0\\
 {H_0}^{\bot}\end{matrix}\end{matrix},$$ where $\sigma(A)=\sigma(T),~\sigma_{lre}(A)=\sigma_{lre}(T)$ and $\text{ind}(\lambda-A)=\text{ind}(\lambda-T)$, for $\lambda\in\rho_{s-F}(T)$.
 Let $$B_\epsilon=\begin{matrix}\begin{bmatrix}
0&\epsilon/2\\
&0&\epsilon/2\\
&&\ddots&\ddots
\end{bmatrix}&
\begin{matrix}
e_1\\
 e_2\\
  \vdots\end{matrix}\end{matrix},$$ where $\{e_i\}_{i=1}^{\infty}$ is an ONB of $H_0$ and $$K_2=\begin{matrix}\begin{bmatrix}
B_\epsilon\\
&0\\
\end{bmatrix}&
\begin{matrix}
H_0\\
{H_0}^{\bot}\end{matrix}\end{matrix}.$$ Obviously,
$||K_1+K_2||<\epsilon$ and there exists
$\lambda\in\partial\mathbb{D}$ such that
ind$(T+K_1+K_2-\lambda)=1>0$. Hence $T+K_1+K_2\in DC(H)^0$.

Case (3). By theorem \ref{Apostol-Morrel} there exists $C_1$ such
that $||C_1||<\epsilon/2$ and
$$T+C_1=\begin{matrix}\begin{bmatrix}
S_+&*&*\\
&A&*\\
&&S_-\\
\end{bmatrix}&
\begin{matrix}
\end{matrix}\end{matrix},$$ where $S_+$ is either absent or
unitarity equivalent to a subnormal operator and
$\partial\mathbb{D}\backslash\sigma(S_+)$ contains a small arc in
$\partial\mathbb{D}$, $A$ is similar to a normal operator with
finite spectrum and
$\sigma_{lre}(A)\cap\partial\mathbb{D}\neq\emptyset$, $S_-$ is
either absent or unitarity equivalent to the adjoint of a subnormal
operator and $\sigma(S_-)\cap\partial\mathbb{D}=\emptyset$;
$\sigma(S_+),~\sigma(A),~\sigma(S_-)$ are pairwise disjoint.
Furthermore,
$$\sigma_{lre}(T+C_1)=\sigma_{lre}(S_+)\cup\sigma_{lre}(A)\cup\sigma_{lre}(S_-)~\text{and}~\rho(T+C_1)=\rho(S_+)\cap\rho(A)\cap\rho(S_-).$$
Hence $$\sigma_{lre}(T+C_1)\cap\partial\mathbb{D}\neq\emptyset \
\text{and} \ \rho(T+C_1)\cap\partial\mathbb{D}\neq\emptyset.$$ Then
we can obtain $C_2$ through the technology of case (2) such that
$||C_2||<\epsilon/2$ and ind$(T+C_1+C_2-\lambda_1)>0$ (where
$\lambda_1\in\partial\mathbb{D}$). Hence $T+C_1+C_2\in DC(H)^0$. The
first equation is complete.

Second, we prove $\overline{DC(H)\backslash
DC(H)^0}=\overline{LY(H)\backslash LY(H)^0}= G_1\cup G_2$. Clearly,
\begin{eqnarray*}\overline{DC(H)}~\backslash~
\overline{DC(H)^0}\subseteq\overline{DC(H)\backslash
DC(H)^0}\subseteq\overline{LY(H)\backslash
LY(H)^0}\subseteq\overline{LY(H)}~\backslash LY(H)^0.
\end{eqnarray*}
Then $G_1\subseteq \overline{DC(H)\backslash
DC(H)^0}\subseteq\overline{LY(H)\backslash LY(H)^0}\subseteq G_1\cup
G_2$. In order to obtain the result, we only need to show $G_2
\subseteq\overline{DC(H)\backslash DC(H)^0}$.

For any $T\in G_2$ and $\epsilon>0$, according to AFV theorem there
exists a compact operator $K_1$ such that $||K_1||<\epsilon/2$ and
$$T+K_1=\begin{matrix}\begin{bmatrix}
\lambda_0I&*\\
&A\\
\end{bmatrix}&
\begin{matrix}
H_0\\
 {H_0}^{\bot}\end{matrix}\end{matrix}$$ where $\lambda_0\in
 \partial\mathbb{D}\cap\sigma_{lre}(T)$. By lamma \ref{per}, we know there exists a compact operator $K_\epsilon$ such that
$ ||K_\epsilon||<\epsilon/2$ and $\lambda_0I+K_\epsilon $ is
distributionally chaotic. Let $$K_2=\begin{matrix}\begin{bmatrix}
K_\epsilon\\
&0\\
\end{bmatrix}&
\begin{matrix}
H_0\\
{H_0}^{\bot}\end{matrix}\end{matrix}.$$ So $||K_1+K_2||<\epsilon$
and $T+K_1+K_2$ is distributionally chaotic. Notice
$$\rho_{s-F}^{(+)}(T+K_1+K_2)\cap\partial\mathbb{D}=\rho_{s-F}^{(+)}(T)\cap\partial\mathbb{D}=\emptyset,$$
we know $T+K_1+K_2\in \{DC(H)^0\}^c$. The second equation is
complete.
\end{proof}

\section{Some other results}
In this section, we consider the relation between hypercyclic
operators and distributionally chaotic operators, and the closure of
the set of all norm-unimodal operators.

Recall the definition of  chaos given by Devaney \cite{Devaney} as
follows.
\begin{definition}
Suppose that $f: X\rightarrow X$ is a continuous function on a
complete separable metric space $X$, then $f$ is Devaney's chaotic
if:

(a) the periodic points for $f$ are dense in $X$,

(b) $f$ is transitive,

(c) $f$ has sensitive dependence on initial conditions.
\end{definition}
It was shown by Banks et. al. \cite{Banks} that if $f$ satisfies
$(a)$ and $(b)$, then $f$ must have sensitive dependence on initial
conditions. Hence only the first two conditions of the definition
need to be verified.

Denote by $HC(H)$ and $DE(H)$ the set of all hypercyclic operators
and the set of all Devaney's chaotic operators on $H$ respectively.
Obviously, $DE(H)\subseteq HC(H)$.
\begin{proposition}\label{8}
$\overline{DE(H)}=\overline{HC(H)}\subseteq\overline{DC(H)^0}=\overline{LY(H)^0}$.
\end{proposition}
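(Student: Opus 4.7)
The plan is to split the claim into two parts. The trivial inclusion $\overline{DE(H)}\subseteq\overline{HC(H)}$ follows from $DE(H)\subseteq HC(H)$. For the reverse equality $\overline{HC(H)}\subseteq\overline{DE(H)}$, I would invoke Herrero's spectral characterization of $\overline{HC(H)}$ from \cite{Her,Her2,Her1} together with the refinement showing that the same spectral picture is attained by Devaney chaotic operators; this uses the approximation machinery of \cite{HER}. It is the step I expect to be the main obstacle, though it is external to this paper and relies only on results already cited here.

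The real content is the inclusion $\overline{HC(H)}\subseteq\overline{DC(H)^0}=G_0$, where $G_0$ is the spectral set from Theorem \ref{7}. Since $G_0$ is closed by Fredholm theory (as observed in the proof of Theorem \ref{7}), it suffices to show $HC(H)\subseteq G_0$. Fix a hypercyclic $T$; I would use two classical consequences of hypercyclicity. First, $\sigma_p(T^*)=\emptyset$: if $T^*v=\bar\lambda v$ with $v\neq 0$ and $x$ is a hypercyclic vector, then $\langle T^n x,v\rangle=\bar\lambda^n\langle x,v\rangle$ is a geometric sequence in $\mathbb{C}$ and cannot be dense, contradicting density of the orbit $\{T^n x\}$; hence $\text{dimKer}(\lambda-T)^*=0$ for every $\lambda\in\mathbb{C}$. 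Second, $\sigma(T)\cap\partial\mathbb{D}\neq\emptyset$: otherwise Riesz decomposition splits $T$ into blocks of spectral radii $<1$ and $>1$, forcing every orbit either to $0$ or to infinity as in Lemma \ref{2}, contradicting hypercyclicity.

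With these in hand, pick $\lambda_0\in\sigma(T)\cap\partial\mathbb{D}$. If $\lambda_0\in\sigma_{lre}(T)$, then $\lambda_0\notin\rho_{s-F}(T)\supseteq\rho_{s-F}^{(0)}(T)\cup\rho_{s-F}^{(-)}(T)$, so $T\in G_0$ directly. Otherwise $\lambda_0\in\rho_{s-F}(T)$ and the first fact gives $\text{ind}(\lambda_0-T)=\text{dimKer}(\lambda_0-T)\geq 0$. If this index were $0$, then both kernel and cokernel of $\lambda_0-T$ would be trivial and this semi-Fredholm operator would be bijective, hence invertible by the open mapping theorem, contradicting $\lambda_0\in\sigma(T)$. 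So $\text{ind}(\lambda_0-T)>0$, i.e.\ $\lambda_0\in\rho_{s-F}^{(+)}(T)$, and again $T\in G_0$. Combining with $\overline{DC(H)^0}=\overline{LY(H)^0}$ from Theorem \ref{7} finishes the argument.
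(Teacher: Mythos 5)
Your proof is correct, but for the substantive inclusion $\overline{HC(H)}\subseteq\overline{DC(H)^0}$ it takes a genuinely different route from the paper. The paper simply quotes Herrero's spectral characterization $\overline{DE(H)}=\overline{HC(H)}=\{T;\ \sigma_w(T)\cup\partial\mathbb{D}\ \text{connected},\ \sigma_0(T)=\emptyset,\ \mathrm{ind}(\lambda-T)\geq 0\ \text{on}\ \rho_{s-F}(T)\}$ and checks that this set sits inside $G_0$ from Theorem \ref{7} (the point being that $\mathrm{ind}\geq 0$ kills $\rho_{s-F}^{(-)}(T)$, and if $\partial\mathbb{D}$ lay entirely in $\rho_{s-F}^{(0)}(T)$ then $\sigma_w(T)$ and $\partial\mathbb{D}$ would be disjoint nonempty compacta, contradicting connectedness of their union). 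You instead prove $HC(H)\subseteq G_0$ directly from Kitai's two classical facts --- $\sigma_p(T^*)=\emptyset$ and $\sigma(T)\cap\partial\mathbb{D}\neq\emptyset$ --- and then use closedness of $G_0$; this is more elementary and self-contained, needing Herrero's machinery only for the (external) identity $\overline{DE(H)}=\overline{HC(H)}$, whereas the paper's route reuses a single citation for both halves. Your case analysis is sound; the only point worth spelling out is that in the case $\lambda_0\in\rho_{s-F}(T)$ with index $0$, surjectivity of $\lambda_0-T$ follows because the semi-Fredholm range is closed and, by triviality of $\mathrm{Ker}(\lambda_0-T)^*$, also dense. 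Note also that your argument actually yields the slightly stronger fact that a hypercyclic $T$ with $\partial\mathbb{D}\cap\sigma_{lre}(T)=\emptyset$ already lies in $DC(H)^0$ itself, not merely in its closure.
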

\begin{proof}
According to \cite{Her1} proposition 4 and \cite{Her2}, one can
obtain $\overline{DE(H)}=\overline{HC(H)}=\{T\in B(H)
;~\sigma_w(T)\cup\partial\mathbb{D}$ is connected,
$\sigma_0(T)=\emptyset$ and
ind$(\lambda-T)\geq0,~\lambda\in\rho_{s-F}(T)$\}. Following theorem
\ref{7}, we obtain the result.
\end{proof}

Next, we can see the set of all norm-unimodal operators is large in
the set of all distributionally chaotic operators.
\begin{theorem}\label{9}
$\overline{UN(H)}=\overline{DC(H)}=\overline{LY(H)}$,
$DC(H)^0=LY(H)^0\subseteq UN(H)$ and $\overline{UN(H)\backslash
DC(H)^0}=\overline{DC(H)\backslash
DC(H)^0}=\overline{LY(H)\backslash LY(H)^0}$.
\end{theorem}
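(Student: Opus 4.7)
The plan is to peel off the easy inclusions via $UN(H)\subseteq DC(H)\subseteq LY(H)$, invoke Theorems \ref{5}, \ref{6} and \ref{7}, and thereby reduce to three nontrivial inclusions: (a) $E_1\cup E_2\subseteq\overline{UN(H)}$; (b) $F\subseteq UN(H)$, where $F$ is as in Theorem \ref{6}; and (c) $G_1\cup G_2\subseteq\overline{UN(H)\backslash DC(H)^0}$.

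For (a) and (b) I would reread the constructions in the proofs of Theorems \ref{5} and \ref{6} with norm-unimodality in mind. In each case the distributionally chaotic witnesses produced come in one of two flavours. For $T\in E_2$ or $T\in F$, the proof exhibits an invariant subspace $H_r$ with $T_r:=T|_{H_r}\in B_n(\rho_{s-F}^r(T)\cap\Delta)$ and $\Delta\cap\partial\mathbb{D}\neq\emptyset$, so Theorem \ref{C-D-Operator} makes $T_r$ norm-unimodal. Because the vectors $x_m$ witnessing norm-unimodality can be taken inside the invariant subspace $H_r$, $T$ itself is norm-unimodal with the same constant; this immediately yields (b) and the $E_2$ part of (a). For $T\in E_1$, the construction first perturbs by an AFV compact operator $K_1$ to produce a block $\lambda_0 I$ on an infinite-dimensional $H_0$ with $\lambda_0\in\partial\mathbb{D}$, then perturbs this block by a compact $K_2$ provided by Lemma \ref{per}. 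Although Lemma \ref{per} is stated only for distributional chaos, the construction in \cite{HBZ} yields $I+K_\epsilon$ that is in fact norm-unimodal; translation by $\lambda_0$ preserves this property, and the invariant-block observation above then shows $T+K_1+K_2\in UN(H)$, completing (a).

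For (c), the chain $UN(H)\backslash DC(H)^0\subseteq DC(H)\backslash DC(H)^0\subseteq LY(H)\backslash LY(H)^0$ together with Theorem \ref{7} reduces matters to $G_1\cup G_2\subseteq\overline{UN(H)\backslash DC(H)^0}$. Since $G_0$ is closed (Fredholm theory) and $G_1\cup G_2\subseteq G_0^c=\{\overline{DC(H)^0}\}^c$, any sufficiently close approximation of a point of $G_1\cup G_2$ automatically stays outside $DC(H)^0$. It therefore suffices to approximate $G_1\cup G_2$ by operators in $UN(H)$: for $G_1$ this is a special case of (a), and for $G_2$ it is precisely the construction in the second half of the proof of Theorem \ref{7}, where the approximating operator $T+K_1+K_2$ has $\lambda_0 I+K_\epsilon$ as an invariant block and is norm-unimodal by the strengthened form of Lemma \ref{per}.

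The main obstacle is recording that the compact perturbations produced by Lemma \ref{per}, and therefore those used inside the proofs of Theorems \ref{5} and \ref{7}, are not merely distributionally chaotic but in fact norm-unimodal. Once this strengthening is extracted from \cite{HBZ}, everything else is bookkeeping of the existing constructions plus the elementary fact that a norm-unimodal invariant block forces the ambient operator to be norm-unimodal with the same constant.
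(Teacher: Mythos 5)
Your reduction to the three inclusions (a) $E_1\cup E_2\subseteq\overline{UN(H)}$, (b) $F\subseteq UN(H)$ and (c) $G_1\cup G_2\subseteq\overline{UN(H)\backslash DC(H)^0}$ matches the paper, and your treatment of (b), of the $E_2$ part of (a), and of the $G_1$ part of (c) (via the Cowen--Douglas restriction $T_r$ and the observation that a norm-unimodal invariant block makes the ambient operator norm-unimodal) is exactly what the paper does. The gap is your key claim that Lemma \ref{per} can be strengthened, i.e.\ that the \emph{compact} perturbation $K_\epsilon$ of \cite{HBZ} makes $I+K_\epsilon$ norm-unimodal. This is false, and the paper says so explicitly inside its proof of this very theorem: an operator $T\in E_1$ with $\sigma(T)=\sigma_{lre}(T)\subseteq\mathbb{D}^{-}$ (the identity is the prototype) cannot be carried into $UN(H)$ by any compact perturbation. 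Consequently your arguments for the $E_1$ part of (a) and the $G_2$ part of (c) collapse. What the paper does instead is replace the compact $K_2$ of Lemma \ref{per} by a genuinely non-compact small perturbation: a weighted unilateral backward shift $C_1$ (resp.\ $B_\epsilon$) of norm $\epsilon/2$, so that $\lambda_0I+C_1$ lies in $B_1$ of a disk meeting $\partial\mathbb{D}$ and is norm-unimodal by Theorem \ref{C-D-Operator}; note the paper's explicit remark that $K_1+\widetilde{C_1}$ is not compact.

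The $G_2$ case has a second defect. You assert $G_1\cup G_2\subseteq G_0^c$, so that sufficiently close approximants automatically avoid $DC(H)^0$; but in fact $G_2\subseteq G_0=\overline{DC(H)^0}$ (a point of $\sigma_{lre}(T)$ on $\partial\mathbb{D}$ already places $T$ in $G_0$), so no such automatic mechanism is available there. Worse, the non-compact perturbation $B_\epsilon$ that one is forced to use creates a disk of positive index meeting $\partial\mathbb{D}$, which by Theorem \ref{6} would push the approximant \emph{into} $DC(H)^0=F$. The paper's construction for $G_2$ therefore contains a third ingredient absent from your proposal: a uniform infinite-multiplicity normal operator $N$ with $\sigma(N)=\{z;\,|z|\leq\epsilon/3\}$ placed on a second $\lambda_0I$ block, whose essential spectrum absorbs the positive-index region so that $\rho_{s-F}^{(+)}$ of the perturbed operator still misses $\partial\mathbb{D}$ while the $\lambda_0I+B_\epsilon$ block keeps it norm-unimodal.
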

\begin{proof}
First we prove $\overline{UN(H)}=\overline{DC(H)}=\overline{LY(H)}$.
Obviously,
$\overline{UN(H)}\subseteq\overline{DC(H)}=\overline{LY(H)}$, it
suffices to prove $E_1\cup E_2\subseteq\overline{UN(H)}$, where
$E_1,~E_2$ are denoted in theorem \ref{5}. We will show for any
$T\in E_1\cup E_2$ and $\epsilon>0$, there exists $C$ such that
$||C||<\epsilon$ and $T+C\in UN(H)$. But different to theorem
\ref{5}, we can not generally find a compact operator satisfying the
property.

If $T\in E_1$, then choose any $\lambda_0 \in
\partial\mathbb{D}\cap \sigma_{lre}(T)$. According to AFV theorem,
there exists a compact operator $K_1$ such that $||K_1||<\epsilon/2$
and
$$T+{K_1}=\begin{matrix}\begin{bmatrix}
\lambda_0I&*\\
&*\\
\end{bmatrix}&
\begin{matrix}
 H_0\\
  {H_0}^{\bot}\end{matrix}\end{matrix},$$
  where dim$H_0=\infty$. Let $$C_1=\begin{matrix}\begin{bmatrix}
0&\epsilon/2\\
&0&\epsilon/2\\
&&\ddots&\ddots
\end{bmatrix}&
\begin{matrix}
e_0\\
e_1\\
\vdots
\end{matrix}\end{matrix},$$ where $\{e_i\}_{i=0}^{\infty}$ is an ONB of $H_0$. Then $C_1$ is a Cowen-Douglas operator with
$||C_1||=\epsilon/2$. Let
$$\widetilde{C_1}=\begin{matrix}\begin{bmatrix}
C_1\\
&0\\
\end{bmatrix}&
\begin{matrix}
 H_0\\
  {H_0}^{\bot}\end{matrix}\end{matrix}.$$ Following theorem \ref{C-D-Operator},
we know
$$T+K_1+\widetilde{C_1}\in
UN(H),~ (||K_1+\widetilde{C_1}||<\epsilon).$$

Notice $K_1+\widetilde{C_1}$ is not compact. In fact, the operator
$T$ in $ E_1$ satisfying $\sigma(T)=\sigma_{lre}(T)$ and
$\sigma(T)\subseteq\mathbb{D}^{-}$ can not be perturbed into $UN(H)$
by compact operator, since $T+K$ is impossible to be norm-unimodal
(one can observe \cite{HBZ} for details).

If $T\in E_2$, then according to the first step in the proof of
theorem \ref{5}, we know $T$ is norm-unimodal. The first equation is
complete. The second inclusion is immediate obtained from the first
step in the proof of theorem \ref{6}.

Next we prove $\overline{UN(H)\backslash
DC(H)^0}=\overline{DC(H)\backslash
DC(H)^0}=\overline{LY(H)\backslash LY(H)^0}$. Clearly,
$\overline{UN(H)\backslash
DC(H)^0}\subseteq\overline{DC(H)\backslash
DC(H)^0}=\overline{LY(H)\backslash LY(H)^0}.$ We only need to show
$G_1\cup G_2\subseteq\overline{UN(H)\backslash DC(H)^0}$, where
$G_1,~G_2$ are denoted in theorem \ref{7}. Similar to the first step
of theorem \ref{5}, $G_1\subseteq UN(H)\backslash DC(H)^0$.

Let $T\in G_2$ and $\epsilon>0$. By AFV theorem there exists a
compact operator $K_1$ such that $||K_1||<\epsilon/3$ and
$$T+K_1=\begin{matrix}\begin{bmatrix}
\lambda_0I&*&*\\
&A&*\\
&&\lambda_0I
\end{bmatrix}&
\begin{matrix}
H_0\\
 {H_1}\\
  H_2\end{matrix}\end{matrix},$$ where $\lambda_0\in
 \partial\mathbb{D}\cap\sigma_{lre}(T),~\rho_{s-F}^{(+)}(A)\cap\partial\mathbb{D}=\emptyset$.
Let $N\in B(H_2)$ be an uniform infinite multiplicity normal
operator such that $\sigma(N)=\{z\in\mathbb{C};~|z|\leq\epsilon/3\}$
and $$B_{\epsilon}=\begin{matrix}\begin{bmatrix}
0&\epsilon/3\\
&0&\epsilon/3\\
&&\ddots&\ddots
\end{bmatrix}&
\begin{matrix}
e_0\\
 e_1\\
  \vdots\end{matrix}\end{matrix},
$$
where $\{e_i\}_{i=0}^{\infty}$ is an ONB of $H_0$.

Then $||N||=\epsilon/3$; $||B_{\epsilon}||=\epsilon/3, \
B_{\epsilon}\in
  B_1(\mathbb{D}_{\epsilon/3}) \  \text{and} \
\sigma(B_{\epsilon})={\mathbb{D}_{\epsilon/3}}^-.$

Hence
$$
T+K_1+\begin{matrix}\begin{bmatrix}
B_{\epsilon}\\
&0\\
&&0
\end{bmatrix}&
\begin{matrix}
\end{matrix}\end{matrix}+\begin{matrix}\begin{bmatrix}
0\\
&0\\
&&N\\
\end{bmatrix}&
\begin{matrix}
\end{matrix}\end{matrix}=\begin{matrix}\begin{bmatrix}
\lambda_0I+B_{\epsilon}&*&*\\
&A&*\\
&&\lambda_0I+N
\end{bmatrix}&
\begin{matrix}
\end{matrix}\end{matrix}.
$$

Notice
\begin{eqnarray*}
&&\rho_{s-F}^{(+)}(\begin{matrix}\begin{bmatrix}
\lambda_0I+B_{\epsilon}&*&*\\
&A&*\\
&&\lambda_0I+N
\end{bmatrix}&
\begin{matrix}
\end{matrix}\end{matrix})\cap\partial\mathbb{D}\\
&&=[\rho_{s-F}^{(+)}(A)\backslash\{z\in\mathbb{C};~|z-\lambda_0|\leq\epsilon/3\}]\cap
\partial\mathbb{D}\\
&&=\emptyset
\end{eqnarray*}
and $\lambda_0I+B_{\epsilon}$ is norm-unimodal, we obtain the
result.
\end{proof}

\begin{example}
Let $A\in B(H)$ satisfying
 \begin{align*}\left\{
\begin{array}{ll}\hspace{7mm}Ae_i=2e_{i-1},\ \hspace{25mm}  i\geq1,
\\[2mm]
\hspace{7mm}Ae_0=e_{-1},\\[2mm]
 \hspace{7mm}Ae_i=\frac{|i|}{|i|+1}e_{i-1},
\hspace{20mm} i\leq-1.
\end{array}\right.
\end{align*}
where $\{e_i\}_{i=-\infty}^{\infty}$ is ONB of $H$. Then $A$ is
norm-unimodal, but not in $DC(H)^0$.
\end{example}
\begin{proof}
For any $m\in\mathbb{N}$, $||A^i(e_m)||\geq2^i||e_m||,~1\leq i\leq
m$ and $\lim\limits_{n\rightarrow\infty}||A^n(e_m)||=0$. So $A$ is
norm-unimodal. Since $\sigma(A)=\{z\in\mathbb{C};~1\leq |z|\leq2\}$
implies $\partial\mathbb{D}\subseteq \sigma_{lre}(A)$, then $A$ is
not in $DC(H)^0$.
\end{proof}

\section{Connectedness}
The main purpose in this section is to discuss the connectedness for
the sets considered in section 2.
\begin{theorem}\label{10}
$DC(H)^0,~\overline{DC(H)^0}$, $\overline{DC(H)}$ and
$\overline{DC(H)\backslash DC(H)^0}$ ( i.e. $LY(H)^0$,
$\overline{LY(H)^0}$, $\overline{LY(H)}$ and
$\overline{LY(H)\backslash LY(H)^0}$ ) are all arcwise connected.
\end{theorem}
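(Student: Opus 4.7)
The approach is to exhibit, for each of the four sets, a reference operator to which every element can be connected by a continuous arc inside the set. The two workhorse techniques are: (i) Kuiper's theorem, which gives that $GL(H)$ is arcwise connected and hence any similarity $T \mapsto XTX^{-1}$ can be realized by a continuous path of similar operators (each preserving the Fredholm index and the semi-Fredholm domain); and (ii) the direct-sum/absorption trick, in which, after fixing an isomorphism $H \cong H \oplus H$, I connect $T$ to $T_0$ by passing through $T \oplus T_0$ using the interpolations $T \oplus sT_0$ and $(1-s)T \oplus T_0$ for $s \in [0,1]$.

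For $DC(H)^0 = F$ (Theorem \ref{6}), I take as reference a Cowen-Douglas operator $T_0$ with $\partial\mathbb{D}$ in the interior of its spectrum and positive Fredholm index throughout---concretely, the weighted shift $T_0 e_0 = 0$, $T_0 e_n = 2e_{n-1}$ for $n\ge 1$. For $T \in F$, the chain $T \to T\oplus T_0 \to T_0$ stays in $F$ at every stage because at least one summand contributes positive index on $\partial\mathbb{D}$, and indices of direct sums add. A similarity path then identifies the direct sum with $T_0$ itself. The same blueprint proves connectedness of $\overline{DC(H)^0} = G_0$, since the weaker condition $\partial\mathbb{D} \not\subseteq \rho^{(0)}_{s-F} \cup \rho^{(-)}_{s-F}$ is even more robust under the same direct sum.

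For $\overline{DC(H)} = E_1 \cup E_2$ and $\overline{DC(H)\setminus DC(H)^0} = G_1 \cup G_2$ (Theorems \ref{5} and \ref{7}), I propose using the identity $I$ as reference: $\sigma_{lre}(I) = \{1\} \subseteq \partial\mathbb{D}$ puts $I \in E_1$, while $\rho^{(+)}_{s-F}(I) = \emptyset$ simultaneously puts $I \in G_2$. For any $T$ in either set, the path $T \oplus sI$ forces $1 \in \sigma_{lre}$ once $s > 0$, so intermediate operators sit in $E_1$ (respectively $G_2$); afterwards, another interpolation plus a similarity path in $GL(H)$ reaches $I$. For $G_1 \cup G_2$ the key additivity observation is that neither $T$ nor $I$ contributes positive index on $\partial\mathbb{D}$, so the direct sum doesn't either.

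The main technical obstacle is verifying that each constructed path remains in the specified set at every $t \in [0,1]$. This requires tracking how the semi-Fredholm structure of direct sums and similarity conjugates interacts with the defining spectral conditions. The hardest case is $G_1 \cup G_2$, because an operator can leave the set only by acquiring positive index on $\partial\mathbb{D}$; one must therefore choose every path-piece so that neither summand nor similarity introduces such an index, using that the essential-spectrum anchor $1 \in \sigma_{lre}(I)$ on $\partial\mathbb{D}$ is preserved throughout. A secondary subtlety is that operators may pass between $G_1$ and $G_2$ along the path, but since both are contained in $G_1 \cup G_2$ this poses no problem once one verifies the spectral conditions carefully at each stage.
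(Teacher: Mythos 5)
Your overall strategy (connect every element of the set to a fixed reference operator) is reasonable, but the central device you rely on --- the direct-sum absorption trick --- does not produce paths with the correct endpoints, and one of your explicit interpolations actually leaves the set. Under any fixed identification $H\cong H\oplus H$, the operator $T\oplus 0$ (the $s=0$ end of your leg $T\oplus sT_0$) is not the operator $T$, and in general is not even similar to it; likewise $0\oplus T_0$ is not similar to $T_0$ (for your weighted shift $T_0$, $\ker(0\oplus T_0)$ is infinite--dimensional while $\ker T_0$ is one--dimensional; and $0\oplus I$ is not invertible while $I$ is). So Kuiper's theorem cannot close either end of the chain, and no path from $T$ to $T\oplus 0$ inside the set is supplied. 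This is exactly the gap the paper fills with the Apostol--Foias--Voiculescu theorem: it produces a small \emph{compact} $K$ so that $T+K$ genuinely has a block upper--triangular form with a $\lambda_0 I$ entry, and the straight segment $T+tK$, $t\in[0,1]$, stays in the set because compact perturbations preserve $\rho_{s-F}$ and the index function.

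Second, even ignoring the endpoint problem, the leg $T\oplus sT_0$ exits $F=DC(H)^0$ at $s=\tfrac12$: there $\tfrac12 T_0$ is the ordinary backward shift, whose Wolf spectrum is all of $\partial\mathbb{D}$, so $\lambda-(T\oplus\tfrac12 T_0)$ has non-closed range, hence fails to be semi-Fredholm, for every $\lambda\in\partial\mathbb{D}$; the defining condition ``$\mathrm{ind}(\lambda-\cdot)>0$ for some $\lambda\in\partial\mathbb{D}$'' therefore fails at that instant. Additivity of the index for direct sums is only available when both summands are semi-Fredholm. (A smaller slip of the same kind: $\sigma_{lre}(sI)=\{s\}$ meets $\partial\mathbb{D}$ only at $s=1$, not ``once $s>0$''.) The paper's proof avoids all of this by connecting $T$ via the compact segment $T+tK$ to a triangular form with a $\lambda_0 I$ block, moving $\lambda_0$ continuously onto the unit circle, rotating, and then joining two such operators by a convex combination of their triangular forms, verifying at each $t$ that some $\lambda\in\partial\mathbb{D}$ retains the required spectral property. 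You would need analogues of each of these steps to repair your argument.
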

\begin{proof}
We show $DC(H)^0$ is arcwise connected, others are similar.

First step, for any $T\in DC(H)^0$, it can be connected to
$\widetilde{T}\in DC(H)^0$, where
$\sigma_{lre}(\widetilde{T})\cap\partial\mathbb{D}\neq\emptyset$. If
$\sigma_{lre}({T})\cap\partial\mathbb{D}\neq\emptyset$, then
obviously.

Let $\sigma_{lre}({T})\cap\partial\mathbb{D}=\emptyset$. Then
$\partial\mathbb{D}\subseteq\rho_{s-F}^{(+)}(T)$. Choose
$\lambda_0\in\sigma_{lre}(T)$, by AFV theorem there exists a compact
operator $K$ such that $||K||<\epsilon$ and
$$T+K=\begin{matrix}\begin{bmatrix}
\lambda_0I&*&*\\
&A&*\\
&&\lambda_0I\\
\end{bmatrix}&
\begin{matrix}
\end{matrix}\end{matrix}$$ where $\sigma(T)=\sigma(A), \sigma_{lre}(T)=\sigma_{lre}(A)
$ and $\text{ind}(\lambda-T)=\text{ind}(\lambda-A)$ for all
$\lambda\in\rho_{s-F}(A)$. Choose $\mu_0\in\partial\mathbb{D}$, let
$$\delta(t)=\begin{matrix}\begin{bmatrix}
\alpha(t)I&*&*\\
&A&*\\
&&\alpha(t)I\\
\end{bmatrix}&
\begin{matrix}
\end{matrix}\end{matrix},~1< t\leq2,$$ where $\alpha(t)=(t-1)(\mu_0-\lambda_0)+\lambda_0, 1< t\leq2$.
Define
\begin{align*}\beta(t)&=\left\{
\begin{array}{ll}\hspace{7mm}T+tK,\ \hspace{20mm}  0\leq t\leq1,
\\[2mm]
\hspace{7mm}\delta(t), \hspace{27mm}1< t\leq2.
\end{array}\right.
\end{align*}
Obviously,
$$\beta(0)=T,~\beta(2)=\widetilde{T}:=\begin{matrix}\begin{bmatrix}
\mu_0I&*&*\\
&A&*\\
&&\mu_0I\\
\end{bmatrix}&
\begin{matrix}
\end{matrix}\end{matrix},~\sigma_{lre}(\widetilde{T})\cap\partial\mathbb{D}=\{\mu_0\}\neq\emptyset$$
and  $\beta(t)$ is continuous on [0,2].

For any $1< t\leq2$,
$\rho^{(+)}_{s-F}(\delta(t))=\rho^{(+)}_{s-F}(A)\backslash
\{\alpha(t)\}$, then $
\rho^{(+)}_{s-F}(\delta(t))\cap\partial\mathbb{D}\neq\emptyset.$
Notice
$\rho^{(+)}_{s-F}(T+tK)=\rho^{(+)}_{s-F}(T)\supseteq\partial\mathbb{D}~\text{for
all}~0\leq t\leq1,$ we know $\{\beta(t);~0\leq t\leq2\}\subseteq
DC(H)^0$. The first step is complete.

Second step, we show for any $T,S\in
DC(H)^0,~\sigma_{lre}(T)\cap\partial\mathbb{D}\neq\emptyset,~\sigma_{lre}(S)\cap\partial\mathbb{D}\neq\emptyset$,
$T$ and $S$ can be connected.

Let
$\lambda_0\in\partial[\rho_{s-F}^{(+)}(T)\cap\partial\mathbb{D}]$ be
the point such that there exists $\theta_0>0$ s.t.
$\{\lambda_0e^{i\theta};~0<\theta<\theta_0\}\subseteq\rho_{s-F}^{(+)}(T)$.
Similarly, we can obtain
$\lambda_1\in\partial[\rho_{s-F}^{(+)}(S)\cap\partial\mathbb{D}]$
and $\theta_1>0$ such that
$\{\lambda_1e^{i\theta};~0<\theta<\theta_1\}\subseteq\rho_{s-F}^{(+)}(S)$.
Notice there exists $\theta^{'}$ such that
$\lambda_0=e^{i\theta^{'}}\lambda_1$, define $\widetilde{S}=
e^{i\theta'}S$. Then
$\lambda_0\in\partial[\rho_{s-F}^{(+)}(\widetilde{S})\cap\partial\mathbb{D}]$
and
$\{\lambda_0e^{i\theta};~0<\theta<\theta_1\}\subseteq\rho_{s-F}^{(+)}(\widetilde{S})$
(let $\Phi_0$ be the component of $\rho_{s-F}(\widetilde{S})$ which
contains $\{\lambda_0e^{i\theta};~0<\theta<\theta_1\}$).

By AFV theorem and theorem 3.48 \cite{HER}, for any $\epsilon>0$,
there exist compact operators $K_1,~K_2$ such that
$||K_1||<\epsilon,~||K_2||<\epsilon$ and
$$T+K_1=\begin{matrix}\begin{bmatrix}
\lambda_0I&C_1\\
&A_1\\
\end{bmatrix}&
\begin{matrix}
H_1\\
{H_1}^{\bot}\end{matrix}\end{matrix},~\widetilde{S}+K_2=\begin{matrix}\begin{bmatrix}
A_2&C_2\\
&\lambda_0I\\
\end{bmatrix}&
\begin{matrix}
{H_2}^{\bot}\\
H_2\end{matrix}\end{matrix},$$ where $
\sigma(A_1)=\sigma(T),~\sigma_{lre}(A_1)=\sigma_{lre}(T)$ and
ind$(\lambda-A_1)=\text{ind}(\lambda-T)$ for all
$\lambda\in\rho_{s-F}(T); $
$\sigma(A_2)=\sigma(\widetilde{S}),~\sigma_{lre}(A_2)=\sigma_{lre}(\widetilde{S})$,
ind$(\lambda-A_2)=\text{ind}(\lambda-\widetilde{S})$ for all
$\lambda\in\rho_{s-F}(\widetilde{S})$ and
$\text{minind}(\lambda-A_2)=0$ for all $\lambda\in\Phi_0$. Without
loss of generality, let $H_1={H_2}^{\bot}$. Define
\begin{align*}\gamma(t)&=\left\{
\begin{array}{ll}e^{it}S,\ \hspace{75mm}  0\leq t<\theta^{'}, \\[2mm]
\widetilde{S}+(t-\theta^{'})K_2, \hspace{57mm}  \theta^{'}\leq t\leq\theta^{'}+1,
\\[2mm]
(t-(\theta^{'}+1))(T+K_1)+(\theta^{'}+2-t)(\widetilde{S}+K_2), \hspace{6mm}  \theta^{'}+1< t<\theta^{'}+2,
\\[2mm]
T+[\theta^{'}+3-t]K_1, \hspace{48mm}  \theta^{'}+2\leq t\leq\theta^{'}+3.
\\
\end{array}\right.
\end{align*}
Obviously, $\gamma(0)=S,~\gamma(\theta^{'}+3)=T$ and $\gamma(t)$ is
continuous on [0, $\theta^{'}+3$]. We prove $\{\gamma(t);~0\leq
t\leq\theta^{'}+3\}\subseteq DC(H)^0$.

a) $\rho_{s-F}^{(+)}(e^{it}S)=e^{it}\rho_{s-F}^{(+)}(S),~0\leq
t<\theta^{'}$ and
$\rho_{s-F}^{(+)}(S)\cap\partial\mathbb{D}\neq\emptyset$ implies
$\{e^{it}S;~0\leq t<\theta^{'}\}\subseteq DC(H)^0$,

b)
$\rho_{s-F}^{(+)}(\widetilde{S}+(t-\theta^{'})K_2)=\rho_{s-F}^{(+)}(\widetilde{S})=e^{i\theta^{'}}\rho_{s-F}^{(+)}({S}),~\theta^{'}\leq
t\leq\theta^{'}+1$ and
$\rho_{s-F}^{(+)}(S)\cap\partial\mathbb{D}\neq\emptyset$ implies
$\{\widetilde{S}+(t-\theta^{'})K_2;~\theta^{'}\leq
t\leq\theta^{'}+1\}\subseteq DC(H)^0$,

c) For any given $\theta^{'}+1< t<\theta^{'}+2$, since
\begin{align*}
\rho_{s-F}^{(+)}([t-(\theta^{'}+1)]\lambda_0+[\theta^{'}+2-t]A_2)&=[t-(\theta^{'}+1)]\lambda_0+[\theta^{'}+2-t]\rho_{s-F}^{(+)}(A_2)
\\
&=\lambda_0+(\theta^{'}+2-t)(\rho_{s-F}^{(+)}(A_2)-\lambda_0),
\end{align*}
and
\begin{align*}
\rho_{s-F}^{(+)}([t-(\theta^{'}+1)]A_1+[\theta^{'}+2-t]\lambda_0)&=[t-(\theta^{'}+1)]\rho_{s-F}^{(+)}(A_1)+[\theta^{'}+2-t]\lambda_0
\\ &=\lambda_0+[t-(\theta^{'}+1)](\rho_{s-F}^{(+)}(A_1)-\lambda_0),
\end{align*}
we know there exists $\theta_t>0$ such that
\begin{align*}
&\{\lambda_0e^{i\theta};~0<\theta<\theta_t\} \\
\subseteq
&\rho_{s-F}^{(+)}([t-(\theta^{'}+1)]\lambda_0+[\theta^{'}+2-t]A_2)\cap\rho_{s-F}^{(+)}([t-(\theta^{'}+1)]A_1+[\theta^{'}+2-t]\lambda_0).
\end{align*}
For each $\lambda\in\{\lambda_0e^{i\theta};~0<\theta<\theta_t\}$,
notice $ \text{minind}(\mu-A_2)=0 \ \text{for} \ \mu\in\Phi_0, $ so
$\text{minind}([t-(\theta^{'}+1)]\lambda_0+[\theta^{'}+2-t]A_2-\lambda)=0$.
Moreover,
ind$([t-(\theta^{'}+1)]\lambda_0+[\theta^{'}+2-t]A_2-\lambda)>0$,
then we have
$[t-(\theta^{'}+1)]\lambda_0+[\theta^{'}+2-t]A_2-\lambda$ is epic.

Consequently
\begin{align*}
&\text{ind}([t-(\theta^{'}+1)](T+K_1)+[\theta^{'}+2-t](\widetilde{S}+K_2)-\lambda)
\\
=&\text{ind}([t-(\theta^{'}+1)]\lambda_0+[\theta^{'}+2-t]A_2-\lambda)+
\\ &\text{ind}([t-(\theta^{'}+1)]A_1+[\theta^{'}+2-t]\lambda_0-\lambda) \\ &>0.
\end{align*}
Hence
$\{(t-(\theta^{'}+1))(T+K_1)+(\theta^{'}+2-t)(\widetilde{S}+K_2);~\theta^{'}+1<
t<\theta^{'}+2\}\subseteq DC(H)^0$.

d)
$\rho_{s-F}^{(+)}(T+[\theta^{'}+3-t]K_1)=\rho_{s-F}^{(+)}(T),~\theta^{'}+2\leq
t\leq\theta^{'}+3$ and
$\rho_{s-F}^{(+)}(T)\cap\partial\mathbb{D}\neq\emptyset$ implies
$\{T+[\theta^{'}+3-t]K_1;~\theta^{'}+2\leq
t\leq\theta^{'}+3\}\subseteq DC(H)^0.$

Therefore, $\{\gamma(t);~0\leq t\leq\theta^{'}+3\}\subseteq
DC(H)^0$. The second step is complete.

Thus, $DC(H)^0$ is arcwise connected.
\end{proof}

\begin{example}
Let $B$ be backward unilateral shift. Then there exists an arc
$\alpha(t)$ in $DC(H)^0$ which connects $5B$ and $5B^2$.
\end{example}
\begin{proof}
Through easy compute, $\sigma(5B)=\sigma(5B^2)=5\mathbb{D}^-$ and
$\text{ind}(\lambda-5B)=1,~\text{ind}(\lambda-5B^2)=2$ for
$|\lambda|<5$. First there exist compact operators $K_1,~K_2$ such
that
$$
5B+K_1=\begin{matrix}\begin{bmatrix}
-5I&C_1\\
&A_1\\
\end{bmatrix}&
\begin{matrix}
H_1\\
 {H_1}^{\bot}\end{matrix}\end{matrix} \ \text{and} \
5B^2+K_2=\begin{matrix}\begin{bmatrix}
A_2&C_2\\
&5I\\
\end{bmatrix}&
\begin{matrix}
 {H_2}^{\bot}\\
H_2\end{matrix}\end{matrix},$$ where
$\sigma(A_1)=\sigma(5B),~\sigma_{lre}(A_1)=\sigma_{lre}(5B)$ and
$\text{ind}(\lambda-A_1)=\text{ind}(\lambda-5B)$ for all
$\lambda\in\rho_{s-F}(5B)$;
$\sigma(A_2)=\sigma(5B^2),~\sigma_{lre}(A_2)=\sigma_{lre}(5B^2)$ and
$\text{ind}(\lambda-A_2)=\text{ind}(\lambda-5B^2)$ for all
$\lambda\in\rho_{s-F}(5B^2)$. Without loss of generality, let
$H_1={H_2}^{\bot}$. Define
\begin{align*}
\alpha(t)&=\left\{
\begin{array}{ll}\hspace{7mm}5B+(1+t)K_1,\hspace{35mm}  -1\leq t\leq0,
\\[2mm]
\hspace{7mm}\delta(t), \hspace{55mm}0< t<1,\\[2mm]
\hspace{7mm} 5B^2+(2-t)K_2,\hspace{35mm}  1\leq t\leq2,
\end{array}\right.
\end{align*}
where $\delta(t)=\begin{matrix}\begin{bmatrix}
-5(1-t)+tA_2&(1-t)C_1+tC_2\\
&(1-t)A_1+5t\\
\end{bmatrix}&
\begin{matrix}
\end{matrix}\end{matrix},~0< t<1$.

Obviously, $\alpha(-1)=5B,~\alpha(2)=5B^2$ and $\alpha(t)$ is
continuous on [-1,2]. It suffices to show $\alpha(t)\in DC(H)^0$ for
any $-1\leq t\leq2$.

(1) $\rho_{s-F}^{(+)}(5B+(1+t)K_1)=\rho_{s-F}^{(+)}(5B),~-1\leq
t\leq0$ and $\partial\mathbb{D}\subseteq\rho_{s-F}^{(+)}(5B)$
implies $\{5B+(1+t)K_1;~-1\leq t\leq0\}\subseteq DC(H)^0$.

(2) $\rho_{s-F}^{(+)}(\delta(t))
=[\rho_{s-F}^{(+)}(-5(1-t)+tA_2)]\cup
[\rho_{s-F}^{(+)}((1-t)A_1+5t)]
=\{-5(1-t)+5t\mathbb{D}\}\cup\{5(1-t)\mathbb{D}+5t\},~0< t<1$
implies $\{\delta(t);~0<t<1\}\subseteq DC(H)^0$.

One can read spectrum properties from the picture as follows. The
number 1 or 2 in the picture means the index of the open disk which
it lies respectively. We choose five moments.

\scalebox{0.38}[0.38]{\includegraphics[165,870][0,0]{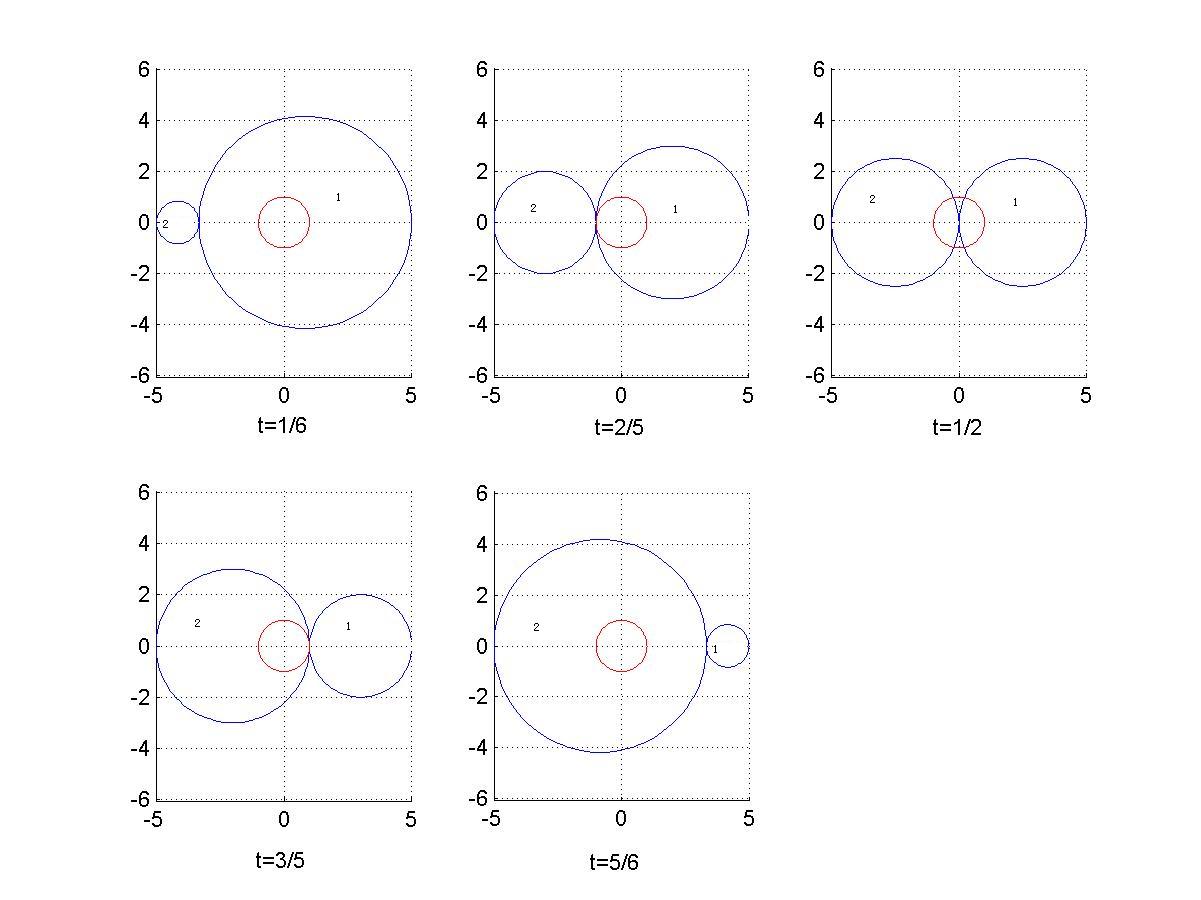}}
\vspace*{120mm}

(3) $\rho_{s-F}^{(+)}(5B^2+(2-t)K_2)=\rho_{s-F}^{(+)}(5B^2),~1\leq
t\leq2$ and $\partial\mathbb{D}\subseteq\rho_{s-F}^{(+)}(5B^2)$
implies $\{5B^2+(2-t)K_2;~1\leq t\leq2\}\subseteq DC(H)^0$.

Hence $\{\alpha(t),~-1\leq t\leq2\}\subseteq DC(H)^0$.
\end{proof}

{\bf Acknowledgement.} A large part of this article was developed
during the seminar on operator theory and dynamical system held at
the University of Jilin in China. The authors are deeply indebted to
Cao Yang, Cui Puyu, Hu Xi etc..

\end{document}